\documentclass[10pt,oneside,reqno]{amsart}
\usepackage{amsthm}
\usepackage{amsfonts}
\usepackage{amsmath}
\usepackage{amssymb}
\usepackage{mathtools}

\newtheorem{thm}{Theorem}[section]
\newtheorem{cor}[thm]{Corollary}
\newtheorem{lemma}[thm]{Lemma}
\newtheorem{prop}[thm]{Proposition}
\theoremstyle{definition}
\newtheorem{defn}[thm]{Definition}
\theoremstyle{remark}
\newtheorem{rem}[thm]{Remark}
\numberwithin{equation}{section}

\newtheorem{conj}[thm]{Conjecture}
\newtheorem{todo}{ToDo}

\def\<{\prec}
\def\>{\succ}
\def\O{{\mathcal O}}
\def\bes{\begin{equation*}}
\def\be{\begin{equation}}
\def\ee{\end{equation}}
\def\ees{\end{equation*}}

\def\a{\alpha}
\def\b{\beta}

\allowdisplaybreaks
\def\NS{N\!S}

\def\bC{\mathbb{C}}
\def\bN{\mathbb{N}}
\def\bR{\mathbb{R}}

\def\cR{\mathcal{R}}

\def\arcsinh{\operatorname{arcsinh}}

\def\ba{{\bf a}}

\def\bg{{\bf g}}
\def\bh{{\bf h}}
\def\bd{{\bf d}}

\def\bs{{\bf s}}


\begin{document}

\title[Asymptotic expansions of stable, stabilizable and stabilized means]{Asymptotic expansions of stable, stabilizable and stabilized means with applications}
\author{Lenka Mihokovi\'c}
\email{lenka.mihokovic@fer.hr}
\address{Lenka Mihokovi\'c, University of Zagreb, Faculty of Electrical Engineering and Computing, Department of Applied Mathematics, Unska 3, 10000 Zagreb, Croatia}
\subjclass{26E60; 41A60; 39B22}
\keywords{Asymptotic expansion, Stable Means, Stabilizable means, Stabilized means}
\begin{abstract}
	In this paper we present a complete asymptotic expansion of
	a symmetric homogeneous stable (balanced), stabilizable and stabilized mean.
	By including known asymptotic expansions of parametric means
it is shown how the obtained coefficients are used to solve the problem of identifying stable means within classes of parametric means under consideration,  how to disprove some mean is stabilizable or stabilized and how to obtain best possible parameters such that given mean is stabilizable with some pair of parametric means.
\end{abstract}
\maketitle

\begin{section}{Introduction}

Consider \emph{bi-variate mean} $M$, i.e.\ 
function $M\colon \bR^+ \times \bR^+\to\bR^+$
such that
\be\label{def-mean-minmax}
	 \min(s,t) \le M(s,t) \le \max(s,t).
\ee
We say that mean $M$ is symmetric if
\bes
	 M(s,t) =M(t,s),\ s,t>0,
\ees
and homogeneous (of degree 1) if
\bes
	 M(\lambda s,\lambda t)=\lambda M(s,t),\ \lambda,s,t,>0.
\ees
Let $M$, $N$, $K$, be three homogeneous symmetric bi-variate means and let
\bes
	\cR(K,N,M)(s,t)=K\Bigl( N\bigl(s,M(s,t)\bigr),N\bigl(M(s,t),t\bigr) \Bigr).
\ees
$\cR$ is also called the resultant mean--map of $K$, $M$ and $N$.
Observe the following functional equation
\be\label{def-stable}
	M(s,t) =\cR(M,M,M)(s,t)=M\Bigl( M\bigl(s,M(s,t)\bigr),M\bigl(M(s,t),t\bigr) \Bigr),
\ee
which has been examined by many authors in various settings. 

G.\ Aumann (\cite{Aumann-1934}) studied constructions of means of several arguments and corresponding iterative algorithms, more precisely augmentation of mean to $n+1$ arguments with mean of $n$ arguments given. He called such mean an {upper mean}. 
While studying the opposite procedure, i.e.\ the reduction process (\cite{Aumann-1935}), he introduced the lower mean. Wondering when these two processes are inverse to each other, for $n=2$ \emph{composite functional equation} \eqref{def-stable} appeared. 
G.\ Aumann proved that this functional equation in class of analytic means on $\bC^2$ 
is characteristic to the analytic quasi-arithmetic means.
L.\ R.\ Berrone (\cite{Berrone-2015}) presented key results from Aumann's two papers pointing out non-equivalence of complex methods within class of real variable means
and also analyzed generalizations of Aumann functional equation which involves general weighting operators.
%
%
T.\ Kiss (\cite{Kiss-2021}), calling the equation \eqref{def-stable} \emph{balancing property}, solved it without differentiability assumptions in a class of two-variable means, which contains the class of Matkowski means.

%
In this paper we follow definitions introduced by M.\ Ra\"{i}ssouli who, relying on equation \eqref{def-stable}, also introduced a notion of {stabilizable}  and {stabilized} mean. 
\begin{defn}[\cite{Raiss-2011-stab}]
	A symmetric mean $M$ is said to be:
	\begin{enumerate} 
		\item \emph{Stable} (\emph{balanced}), if $\cR(M,M,M)=M$.
		\item \emph{$(K,N)$-stabilizable}, if for two nontrivial stable means $K$ and $N$ following relation is satisfied:
			\be\label{def-stabilizable}
			M(s,t) =\cR(K,M,N)(s,t) = K\Bigl( M\bigl(s,N(s,t)\bigr),M\bigl(N(s,t),t\bigr) \Bigr).
			\ee
		\item \emph{$(K,N)$-stabilized}, if for two nontrivial stable means $K$ and $N$
		 following relation is satisfied:
		 	\be\label{def-stabilized}
				M(s,t) =\cR(K,N,M)(s,t) =K\Bigl( N\bigl(s,M(s,t)\bigr),N\bigl(M(s,t),t\bigr) \Bigr).
		\ee
	\end{enumerate}
\end{defn}

It can easily be seen that the arithmetic $A$ mean is stable and more general, the (binomial) power mean $B_p$ is also stable. The logarithmic and identric means are known not to be stable. 
Stabilizable or stabilized mean doesn't need to be stable by itself.
Furthermore, geometric mean $G$ is simultaneously $(A,H)$-stabilized and $(H,A)$-stabilized, 
while the Heron mean $H\!e$ is $(A,G)$-stabilized. 
The power binomial mean $B_p$, is stable for all real numbers $p$, 
the power logarithmic mean $L_p$, also known as generalized logarithmic mean, is $(B_p,A)$-stabilizable, 
the power difference mean $D_p$, i.e.\ Stolarsky mean $E_{p,p+1}$, is $(A,B_p)$-stabilizable, 
the power exponential mean $I_p$, i.e.\ Stolarsky mean $E_{p,p}$, is $(G,B_p)$-stabilizable 
and the second power logarithmic mean $l_p$, i.e.\ Stolarsky mean $E_{p,0}$, is $(B_p,G)$-stabilizable (\cite{Raiss-2011-stab}).
For all real numbers $p$ and $q$, Stolarsky mean $E_{p,q}$ is $(B_{q-p},B_p)$-stabilizable (\cite{Raiss-2012-stabStolarsky}). Precise definitions of those means will be given in Section 5. 

A given mean can be stabilizable with respect to two distinct couples of means. 
For instance, logarithmic mean $L$ is simultaneously $(A,G)$-stabilizable and $(H,A)$-stabilizable. 
On the other side, for two given stable means $M_1$ and $M_2$, such that $M_1\le M_2$ and $M_1$
is strict and a cross mean, there exists one and only one $(M_1,M_2)$-stabilizable mean $M$ such that $M_1\le M\le M_2$ (\cite{Raiss-2013-stabAnswer}). 

There are various applications of the stability and stabilizability.
As an extension of the stabilizability concept, A.\ Gasmi and M.\ Ra\"{i}ssouli (\cite{GasmiRaiss-2013-stabGeneralized}) introduced the generalized stabilizability for bi-variate means.
Regarding mean inequalities which have been studied extensively, M.\ Ra\"{i}ssouli (\cite{Raiss-2012-stabRefinements}) presented an approach for obtaining refinements in a convenient manner. He has shown how to obtain in a recursive way an infinite number of lower upper bounds starting from an arbitrary lower and upper bounds of a stabilizable mean.

For two nontrivial stable comparable means the (strict) sub-stability and super-stability concept can be introduced with the appropriate inequality sign in \eqref{def-stabilizable}.
 
 \begin{defn}[\cite{RaissSandor-2014}]
 	Let $K$, $N$ be two nontrivial stable comparable means. Mean $M$ is called
 	\begin{enumerate}
 		\item \emph{$(K,N)$-sub-stabilizable}, if $\cR(K,M,N)\le M$ and $K\le M\le N$,
 		\item \emph{$(K,N)$-super-stabilizable}, if $M\le\cR(K,M,N)$ and $K\le M\le N$.
 	\end{enumerate}
 \end{defn}
 
For example, geometric mean $G$ is $(G,A)$-super-stabilizable (but not strictly), arithmetic mean $A$ is $(G,A)$-sub-stabilizable, logarithmic mean $L$ is strictly $(G,A)$-super-stabilizable and strictly $(A,H)$-sub-stabilizable, identric mean $I$ is strictly $(A,G)$-sub-stabilizable (\cite{RaissSandor-2014}). The first Seiffert mean is strictly $(G,A)$-super-stabilizable (\cite{AnisiuAnisiu-2014}).

In the above mentioned papers some open problems appeared from which we shall single out the following.
\begin{enumerate}
\item Find all pairs $(p,q)$ such that Gini means $G_{p,q}$ and Stolarsky means $E_{p,q}$ are stable (\cite{Raiss-2011-stab}).
\item 
Prove or disprove that the first Seiffert mean $P$ is not stabilizable  (\cite{Raiss-2011-stab}).
\item Find the best real numbers $p>0$ and $q>0$ for which the first Seiffert means $P$ is strictly $(B_p,B_q)$-sub-stabilizable  (\cite{RaissSandor-2014}).
\item Are the second Seiffert mean $T$ and the Neuman-S\'andor mean $\NS$ strictly $(B_p,B_q)$-sub-stabilizable for some real numbers $p>0$, $q>0$  (\cite{RaissSandor-2014})?
\end{enumerate}

The aim of this paper is to apply the previously developed techniques of asymptotic expansions on the equations \eqref{def-stable}, \eqref{def-stabilizable} and \eqref{def-stabilized} in order to obtain the asymptotic expansion of stable, stabilizable and stabilized mean. 

		The (formal) series $\sum_{n=0}^{\infty} a_n \varphi_n(x)$ is said to be an \emph{asymptotic expansion} of a function $f(x)$ as $x\to x_0$, 
		with respect to asymptotic sequence $(\varphi_n(x))_{n\in \bN_0}$, 
		if for each $N\in\bN_0$
		$$
			f(x) \sim \sum_{n=0}^{N}a_n \varphi_n(x) +o(\varphi_N(x)).
		$$
	Approximation of the function to a given accuracy is achieved by approaching the variable to a certain fixed point or a point at infinity. A small number of members of this series ensures a good approximation. Taylor series can also be seen as an asymptotic as $x\to0$. Asymptotic series may be convergent or divergent. For a given asymptotic sequence, asymptotic representation is unique. Conversely, asymptotic series represents a class of asymptotically equal functions.
	Theoretical background from theory of asymptotic expansions can be found in \cite{Erd}.

For a bi-variate symmetric homogeneous stable mean
we find coefficients $a_n$ in the asymptotic power series expansion of the form
\be\label{mean-asymexp}
	M(x-t,x+t) \sim  \sum_{n=0}^\infty a_n t^{2n}x^{-2n+1},\ \text{as } x\to\infty,
\ee
which will be given by a recursive relation. 
It will be shown that the asymptotic representation \eqref{mean-asymexp} is sufficient to obtain the general form
\be\label{mean-asymexp-st}
	M(x+s,x+t) \sim \sum_{n=0}^\infty a_n(s,t) x^{-n+1},\ \text{as } x\to\infty,
\ee
where $a_n \equiv a_{2n}(-t,t)$. 


	Based on the asymptotic expansions, more precisely on positivity of the first non-zero coefficient, there can be introduced notion of asymptotic inequality. Recall its definition.
	\begin{defn}[\cite{Vu}]
	Let $F(s,t)$ be any homogeneous bi-variate function such that
	\bes
		F(x+s,x+t)
		=c_{k}(t,s)x^{-k+1}+\O(x^{-k}).
	\ees
	If $c_{k}(s,t)>0$ for all $s$ and $t$, then
	we say $F$ is \textit{asymptotically} greater than zero, and write
	\bes
		F\succ 0.
	\ees
\end{defn}
Asymptotic inequality is considered as a necessary relation between comparable means.
Namely, if $F\ge0$, then $F\succ 0$, which has been proved in the same paper. Furthermore, for the asymptotic inequalities it is sufficient to observe the case $s=-t$ as explained in \cite{El}. Asymptotic inequalities were used to obtain the best possible parameters in convex combinations of means which include Seiffert (\cite{Vu}) and Neuman-S\'andor (\cite{ElVu-2014-20}) mean and to obtain the best possible parameters such that inequality between some parametric means hold (\cite{ElVu-2014-04}).
In this paper asymptotic inequalities will be used to treat the case of sub-stabilizability with power means.

This paper is organized as follows. In Section 2 
we state some fundamental results regarding operations with asymptotic expansions and show the auxiliary result which will be used afterwards. In Section 3 
we obtain asymptotic expansion of the resultant mean-map provided that all three means involved possess asymptotic expansion. Using this result in order we obtained the asymptotic expansion of stable, stabilizable and stabilized mean. In Section 4 
we obtain necessary conditions for mean $N$ to be simultaneously $(K,M)$ and $(M,K)$-stabilizable, for mean $M$ to be simultaneously $(K,N)$ and $(N,K)$-stabilized and for mean $M$ to be simultaneously $(K,N)$-stabilizable and $(K,N)$-stabilized.
With respect to known asymptotic expansions of parametric means derived in \cite{ElVu-2014-04} and \cite{Vu}, in Section 5 
it will be shown how the obtained coefficients are used to solve the problem of identifying stable means within classes of parametric means under consideration, how to disprove some mean is stabilizable or stabilized and how to obtain best possible parameters such that given mean is stabilizable with some pair of parametric means. Recursive formulas were evaluated using computer algebra system \emph{Mathematica}.
In Section 6 
we sum up all the results, emphasize our conribution to the open questions from cited papers and state new conjectures which arose form this paper.
\end{section}

\begin{section}{Preliminaries}\label{section-Prelim}

Suppose that all means involved here have the asymptotic expansions as $x\to\infty$ of the following type
\begin{align}
	M(x-t,x+t) &\sim \sum_{n=0}^\infty a^M_n t^{2n}x^{-2n+1},\label{def-MNK-M}\\
	N(x-t,x+t) &\sim \sum_{n=0}^\infty a^N_n t^{2n}x^{-2n+1},\label{def-MNK-N}\\
	K(x-t,x+t) &\sim \sum_{n=0}^\infty a^K_n t^{2n}x^{-2n+1}.\label{def-MNK-K}
\end{align}

Operations with asymptotic power series are conducted in very intuitive manner.
Asymptotic expansion of a linear combination corresponds to expansion with same linear combination done term-wise. Coefficients in product of two asymptotic power series are defined by convolution. Also, two asymptotic power series can be divided with the result given in a form of asymptotic series as described in \cite[Lemma 1.1.]{ElVu-2014-04}. 
The composition has asymptotic expansion whose coefficients can be obtained by formal substitution and rearrangement of terms (\cite[p. 20]{Erd}).
Under some reasonable assumptions, asymptotic power series can be differentiated and integrated term by term (\cite[p. 21]{Erd}).
	In the sequel we state the fundamental result on transformations which is about power of an asymptotic series. Coefficients of the new series, which depend on the power $r$ and initial sequence $\ba=(a_n)_{n\in\bN_0}=(a_0,a_1,a_2,\ldots)$, will be denoted here as $P[n,r,\ba]$. We assume all sequences are enumerated from 0.

\begin{lemma}[\cite{ChenElVu-2013,Gould-1974}]
	\label{lemma-power}
	Let 
	$$
		g(x)\sim \sum_{n=0}^\infty a_n x^{-n}
	$$
	be a given asymptotic expansion (for $x\to\infty$) of $g(x)$ with $a_0\neq0$. Then for all real $r$ it holds
	$$
		[g(x)]^{r}\sim \sum_{n=0}^\infty P[n,r,\ba]x^{-n},
	$$
	where $P[0,r,\ba]=a_0^r$ and
	\bes
	        P[n,r,\ba]=\frac1{na_0}\sum_{k=1}^n[k(1+r)-n]a_kP[n-k,r,\ba],\quad n\in\bN.
	\ees
\end{lemma}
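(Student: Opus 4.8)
The plan is to reduce the statement to the logarithmic-derivative identity satisfied by $[g(x)]^{r}$ and then to read off the recursion by matching coefficients; everything can be carried out inside the algebra of (formal) asymptotic power series in $u=1/x$, so that every operation used is one of those recalled at the beginning of this section.

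First I would check that $[g(x)]^{r}$ actually possesses an asymptotic power series expansion. Since $a_0\neq0$ we may write $g(x)=a_0\bigl(1+\rho(x)\bigr)$ with $\rho(x)\sim\sum_{n\ge1}(a_n/a_0)x^{-n}=\O(x^{-1})$, so that $g$ is bounded away from $0$ near infinity and $[g(x)]^{r}=a_0^{\,r}\bigl(1+\rho(x)\bigr)^{r}$ is obtained by substituting the asymptotic series $\rho$ into the binomial series $\sum_{j\ge0}\binom{r}{j}z^{j}$. By the composition rule for asymptotic expansions (\cite[p.~20]{Erd}) this yields an asymptotic series
\bes
	h(x):=[g(x)]^{r}\sim\sum_{n=0}^{\infty}b_{n}x^{-n},\qquad b_0=a_0^{\,r},
\ees
so $b_0$ already agrees with the claimed value $P[0,r,\ba]=a_0^{\,r}$; it remains to identify the $b_n$ for $n\ge1$.

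Next I would exploit the identity $g\,h'=r\,h\,g'$. It holds because $h=g^{r}$ (for the chosen branch, equivalently as a formal power series defined by the binomial composition above), so $h'=r\,g^{r-1}g'$ and multiplication by $g$ gives the stated relation; term-by-term differentiability of asymptotic series (\cite[p.~21]{Erd}) then makes it an identity between the corresponding series. Passing to $u=1/x$ and writing $g=\sum_{j\ge0}a_{j}u^{j}$, $h=\sum_{m\ge0}b_{m}u^{m}$, the relation $g\,\frac{dh}{du}=r\,h\,\frac{dg}{du}$ gives, upon comparing coefficients of $u^{\,n-1}$,
\bes
	\sum_{k=0}^{n-1}(n-k)\,a_{k}\,b_{n-k}\;=\;r\sum_{k=1}^{n}k\,a_{k}\,b_{n-k}.
\ees
Isolating the $k=0$ term on the left, which equals $n\,a_{0}\,b_{n}$, and combining the remaining contributions term by term --- noting that the $k=n$ summand on the right is $\bigl(n(1+r)-n\bigr)a_{n}b_{0}$ and that $rk-(n-k)=k(1+r)-n$ for $1\le k\le n-1$ --- transforms this into
\bes
	n\,a_{0}\,b_{n}=\sum_{k=1}^{n}\bigl[k(1+r)-n\bigr]\,a_{k}\,b_{n-k},
\ees
that is, $b_{n}=\frac{1}{na_{0}}\sum_{k=1}^{n}\bigl[k(1+r)-n\bigr]a_{k}b_{n-k}$. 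An immediate induction on $n$ then shows that the $b_{n}$ coincide with the quantities $P[n,r,\ba]$, and uniqueness of the asymptotic expansion with respect to the fixed asymptotic sequence $(x^{-n})_{n\in\bN_0}$ finishes the proof.

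I do not expect a genuine obstacle: this is the classical Gould / J.C.P.\ Miller recursion, and the only points deserving a line of justification are the existence of the expansion of $g^{r}$ (which uses $a_0\neq0$ and the choice of branch, and is in fact vacuous if one argues with formal power series) and the legitimacy of term-by-term differentiation, both already licensed by the facts quoted at the start of Section~\ref{section-Prelim}. If one prefers to avoid differentiation altogether, the same recursion can be extracted directly from the binomial expansion of $(1+\rho)^{r}$ by reorganizing terms, but the logarithmic-derivative route above is the shortest.
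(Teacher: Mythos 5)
Your proof is correct, but note that the paper itself does not prove this lemma at all: it is imported verbatim from the cited references (Gould's 1974 paper and Chen--Elezovi\'c--Vuk\v{s}i\'c), so there is no in-paper argument to compare against. What you supply is the standard derivation of the Gould / J.C.P.~Miller recursion, and your coefficient bookkeeping checks out: the $k=0$ term on the left is $na_0b_n$, the $k=n$ term can be absorbed because its factor $n-k$ vanishes on the left, and $rk-(n-k)=k(1+r)-n$ gives exactly the stated formula. The one step that deserves the care you already give it is the differentiation: asymptotic series cannot in general be differentiated term by term, so the identity $g\,h'=r\,h\,g'$ should be read (as you indicate) at the level of the \emph{formal} power series in $u=1/x$ --- once the composition rule guarantees that $[g(x)]^r$ has an asymptotic expansion whose coefficients are those of the formal series $a_0^r(1+\rho)^r$, the log-derivative identity among those formal series is purely algebraic and needs no analytic justification. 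With that reading, the argument is complete and self-contained, and it actually adds value over the paper, which leaves the lemma as a black box.
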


\begin{rem}\label{remark-powerlemma}
	It may be useful to consider $P[n,r,\ba]$ as the coefficient by the $x^{-n}$ in the $r$-th power of series assigned to sequence $\ba$, especially when $r$ is a nonnegative integer, wherefrom following useful relations easily follow.
	\begin{enumerate}
		\item $P[n,0,\ba] =\delta_n$, $n\in\bN_0$,  where $\delta_n$ stands for a single-argument Kronecker delta function.
		\item $P[n,1,\ba] = a_n$, $n\in\bN_0$.
		\item $P[0,r,\ba] = a_0^r$, $r\in\bR$.
	\end{enumerate}
\end{rem}

The following auxiliary sequences will be used to express main results. Let
\be\label{def-bgbh1}
\begin{aligned}
	\bg&\coloneqq(1,a_1,0,a_2,0,a_3,\ldots)\\
	\bh&\coloneqq(2,-1,a_1,0,a_2,0,a_3,\ldots).
\end{aligned}
\ee
and also
\bes
\begin{aligned}
	\tilde{\bg}&\coloneqq(1,-a_1,0,-a_2,0,-a_3,\ldots)\\
	\tilde{\bh}&\coloneqq(2,1,a_1,0,a_2,0,a_3,\ldots).
\end{aligned}
\ees

Let us denote by $D(m,n,k)$ and $S(m,n,k)$ terms which will appear within the inner sums later in proof of Theorem \ref{thm-main-RMNK}:
\begin{align}
	D(m,n,k) &= P[k,2n,\tilde{\bg}] P[m-2n-k,-2n+1,\tilde{\bh}]\label{def-Dmnk}\\
		&\quad-P[k,2n,\bg]P[m-2n-k,-2n+1,\bh],\notag \\
	S(m,n,k) &= P[k,2n,\tilde{\bg}] P[m-2n-k,-2n+1,\tilde{\bh}] \label{def-Smnk}\\
		&\quad+P[k,2n,\bg]P[m-2n-k,-2n+1,\bh].\notag
\end{align}
Some of the coefficients $D(m,n,k)$ and $S(m,n,k)$ are equal to zero because of the relation between sequences $\bg$ and $\tilde{\bg}$ and also $\bh$ and $\tilde{\bh}$. 

\begin{lemma}\label{lemma-Dmnk-reduced}
For $m\in\bN_0$, $n\in\{0,1,\ldots,\lfloor \frac{m}2 \rfloor\}$, $k\in \{0,1,\ldots,m-2n\}$, it holds
\be\label{def-Dmnk-reduced}
 	D(m,n,k)=\begin{cases}
 			0,\ & m\text{ even},\\
 			-2P[k,2n,\bg]P[m-2n-k,-2n+1,\bh], &m\text{ odd},
 		\end{cases}
\ee
and
\be\label{def-Smnk-reduced}
 	S(m,n,k)=\begin{cases}
 			2P[k,2n,\bg]P[m-2n-k,-2n+1,\bh],\ & m\text{ even},\\
 			0,& m\text{ odd}.
 		\end{cases}
\ee
\end{lemma}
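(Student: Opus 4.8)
The plan is to trace both products appearing in \eqref{def-Dmnk} and \eqref{def-Smnk} back to a single sign-change rule for the coefficients $P[\,\cdot\,,\,\cdot\,,\,\cdot\,]$ of Lemma~\ref{lemma-power}, after which \eqref{def-Dmnk-reduced} and \eqref{def-Smnk-reduced} fall out of a parity count.

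\textbf{Step 1 (the tilde sequences are alternating-sign twins).} Reading entries off from \eqref{def-bgbh1} and from the definitions of $\tilde\bg,\tilde\bh$, I would verify term by term that
\bes
	\tilde g_j=(-1)^j g_j,\qquad \tilde h_j=(-1)^j h_j,\qquad j\in\bN_0.
\ees
Indeed the only visible sign changes occur at index $1$ (namely $a_1\mapsto-a_1$ in $\bg$, and $-1\mapsto1$ in $\bh$), while every entry at an index $\ge 2$ is left unchanged; since those are exactly the entries that either vanish ($\bg$, and odd indices of $\bh$) or equal some $a_i$ (even indices of $\bh$), the pattern is precisely $(-1)^j$. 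Note also that the leading entries $g_0=\tilde g_0=1$ and $h_0=\tilde h_0=2$ are nonzero, so Lemma~\ref{lemma-power} applies to all four sequences.

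\textbf{Step 2 (sign rule for a power of a series).} I would then isolate the elementary fact that, whenever $\tilde\ba=\bigl((-1)^n a_n\bigr)_{n\in\bN_0}$, one has $P[n,r,\tilde\ba]=(-1)^n P[n,r,\ba]$ for every $n\in\bN_0$ and every $r\in\bR$. The quickest route is uniqueness of asymptotic expansions: if $g(x)\sim\sum_n a_n x^{-n}$ then $\sum_n\tilde a_n x^{-n}=g(-x)$, and $[g(-x)]^r=\sum_n(-1)^n P[n,r,\ba]x^{-n}$ is an asymptotic expansion of the $r$-th power of the tilde series. Alternatively one may induct on $n$ using the recursion of Lemma~\ref{lemma-power}: in $\frac1{n\tilde a_0}\sum_{k=1}^n[k(1+r)-n]\,\tilde a_k\,P[n-k,r,\tilde\ba]$ each term picks up a factor $(-1)^k$ from $\tilde a_k$ and, by the inductive hypothesis, a factor $(-1)^{n-k}$ from $P[n-k,r,\tilde\ba]$, and $(-1)^k(-1)^{n-k}=(-1)^n$ while $\tilde a_0=a_0$; the base case $P[0,r,\tilde\ba]=\tilde a_0^{\,r}=a_0^{\,r}$ is immediate.

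\textbf{Step 3 (assemble).} Applying Step~2 with $(\ba,r)=(\bg,2n)$ and with $(\ba,r)=(\bh,-2n+1)$ evaluated at index $m-2n-k$, and using $(-1)^{-2n}=1$, gives
\begin{align*}
	P[k,2n,\tilde\bg]\,P[m-2n-k,-2n+1,\tilde\bh]
	&=(-1)^k(-1)^{m-k}\,P[k,2n,\bg]\,P[m-2n-k,-2n+1,\bh]\\
	&=(-1)^m\,P[k,2n,\bg]\,P[m-2n-k,-2n+1,\bh].
\end{align*}
Substituting this into \eqref{def-Dmnk} and \eqref{def-Smnk} yields $D(m,n,k)=\bigl[(-1)^m-1\bigr]\,P[k,2n,\bg]\,P[m-2n-k,-2n+1,\bh]$ and $S(m,n,k)=\bigl[(-1)^m+1\bigr]\,P[k,2n,\bg]\,P[m-2n-k,-2n+1,\bh]$; the bracket equals $0$ or $2$ (respectively $-2$ or $0$) according as $m$ is even or odd, which is exactly \eqref{def-Dmnk-reduced} and \eqref{def-Smnk-reduced}. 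The admissibility ranges on $m,n,k$ guarantee that both indices $k$ and $m-2n-k$ are nonnegative, so every $P$ above is well defined.

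The whole argument is routine; the only point I would double-check with care is Step~1, i.e.\ that the ad hoc front entries of $\bh$ (the $2,-1$) together with the interleaved zeros genuinely conform to a clean $(-1)^j$ pattern across all indices, since a single misaligned entry would silently break the parity bookkeeping in Step~3.
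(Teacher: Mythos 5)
Your proof is correct, and it reaches the two key sign relations $P[j,r,\tilde\bg]=(-1)^jP[j,r,\bg]$ and $P[j,r,\tilde\bh]=(-1)^jP[j,r,\bh]$ by a genuinely different route than the paper. The paper introduces auxiliary generating functions $A_1$ (for $(a_1,0,a_2,0,\ldots)$) and $A_2$ (for $(2,0,a_1,0,a_2,\ldots)$), writes $[G]^r=(1+x^{-1}A_1)^r$, $[\tilde G]^r=(1-x^{-1}A_1)^r$, $[H]^r=(A_2-x^{-1})^r$, $[\tilde H]^r=(A_2+x^{-1})^r$, and extracts the $(-1)^j$ from a binomial double-sum in each of the four cases. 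You instead observe the single entrywise relation $\tilde g_j=(-1)^jg_j$, $\tilde h_j=(-1)^jh_j$ and prove once, for any sequence, that the alternating twist $\tilde\ba=((-1)^na_n)_n$ propagates to $P[n,r,\tilde\ba]=(-1)^nP[n,r,\ba]$ — your induction on Gould's recursion from Lemma \ref{lemma-power} is the cleaner of your two justifications, since the $x\mapsto-x$ substitution strictly speaking should be phrased at the level of formal power series rather than asymptotic expansions as $x\to\infty$. Your Step 3 then coincides with the paper's final assembly. Your argument is shorter, more elementary, and isolates a reusable general fact; the paper's computation is more self-contained in that it never states the sign-propagation lemma abstractly.

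One correction to your Step 1, which you rightly flagged as the place to be careful: your description of $\tilde\bg$ is wrong, even though the identity you extract from it is right. It is not true that "the only visible sign change occurs at index $1$" nor that the entries of $\bg$ at index $\ge2$ "vanish": by definition $\bg=(1,a_1,0,a_2,0,a_3,\ldots)$ and $\tilde\bg=(1,-a_1,0,-a_2,0,-a_3,\ldots)$, so \emph{every} odd-indexed entry $g_{2i+1}=a_{i+1}$ is nonzero in general and changes sign, while the even-indexed entries $\ge2$ are the ones that vanish. This is exactly consistent with $\tilde g_j=(-1)^jg_j$ (signs flip at odd $j$, and $(-1)^j\cdot0=0$ at even $j\ge2$), so nothing downstream breaks; but as written the justification describes a different pair of sequences than the ones defined in \eqref{def-bgbh1}. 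For $\bh$ versus $\tilde\bh$ your description is accurate.
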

\begin{proof}
Let us define (generating) functions
\bes
	G(x) =\sum_{k=0}^\infty g_k x^{-k}, \qquad
	\tilde{G}(x) =\sum_{k=0}^\infty \tilde{g}_k x^{-k},
\ees
whose $r$-th power can be expressed as
\bes
	[G(x)]^r=\sum_{j=0}^\infty P[j,r,\bg]x^{-j},\qquad
	[\tilde{G}(x)]^r=\sum_{j=0}^\infty P[j,r,\tilde{\bg}]x^{-j}.
\ees
Connection between coefficients in the expansion of the power of functions $G$ and $\tilde{G}$ can be established using underlying series  $A_1$, the generating function of a sequence 
	$\overline{\ba}=(a_1,0,a_2,0,\ldots)$:
\bes
	A_1(x)= \sum_{k=0}^\infty a_{k+1} x^{-2k}.
\ees
It holds
\begin{align*}
	[G(x)]^r &= (1+x^{-1} A_1(x))^r 
		 =\sum_{k=0}^\infty \binom{r}{k}x^{-k}A_1(x)^{k}\\
		&=\sum_{k=0}^\infty \binom{r}{k}x^{-k}
			\sum_{l=0}^\infty P[l,k,\overline{\ba}]x^{-2l}
		=\sum_{j=0}^\infty \sum_{l=0}^{\lfloor\frac{j}2\rfloor}
			 \binom{r}{j-2l} P[l,j-2l,\overline{\ba}] x^{-j}
\end{align*}
and
\begin{align*}
	[\tilde{G}&(x)]^r = (1-x^{-1} A_1(x))^r 
		 =\sum_{k=0}^\infty \binom{r}{k}(-1)^k x^{-k}A_1(x)^{k}\\
		&=\sum_{k=0}^\infty \binom{r}{k}(-1)^k x^{-k}
			\sum_{l=0}^\infty P[l,k,\overline{\ba}]x^{-2l}
		 =\sum_{j=0}^\infty (-1)^j \sum_{l=0}^{\lfloor\frac{j}2\rfloor}
			\binom{r}{j-2l} P[l,j-2l,\overline{\ba}] x^{-j}
\end{align*}
wherefrom it follows that
\be\label{relation-Pjrg}
	P[j,r,\bg] = (-1)^j	P[j,r,\tilde{\bg}].
\ee
Furthermore, let
\begin{align*}
	H(x) =\sum_{k=0}^\infty h_k x^{-k},\qquad
	\tilde{H}(x) =\sum_{k=0}^\infty \tilde{h}_k x^{-k},
\end{align*}
and also
\bes
	[H(x)]^r =\sum_{j=0}^\infty P[j,r,\bh]x^{-j},\qquad
	[\tilde{H}(x)]^r=\sum_{j=0}^\infty P[j,r,\tilde{\bh}]x^{-j}.
\ees
If $A_2$ denotes the generating function of a sequence 
	$\tilde{\ba}=(2,0,a_1,0,a_2,\ldots)$:
\bes
	A_2(x)= 2 + \sum_{k=1}^\infty a_{k} x^{-2k},
\ees
then the $r$-th power of functions $H$ and $\tilde{H}$ can be written as
\begin{align*}
	[&H(x)]^r = (A_2(x)-x^{-1})^r 
		 = A_2(x)^r (1-x^{-1} A_2(x)^{-1})^r \\
		&=\sum_{k=0}^\infty \binom{r}{k} (-1)^k x^{-k}A_2(x)^{r-k} 
		 =\sum_{k=0}^\infty \binom{r}{k} (-1)^k x^{-k}
			\sum_{l=0}^\infty P[l,r-k,\tilde{\ba}]x^{-2l}\\
		&=\sum_{j=0}^\infty (-1)^j \sum_{l=0}^{\lfloor\frac{j}2\rfloor}
			\binom{r}{j-2l} P[l,r+2l-j,\tilde{\ba}] x^{-j}
\end{align*}
and similarly
\begin{align*}
	 [\tilde{H}(x)]^r &= A_2(x)^r (1+x^{-1} A_2(x)^{-1})^r 
		 =\sum_{k=0}^\infty \binom{r}{k} x^{-k}A_2(x)^{r-k}\\
		&=\sum_{k=0}^\infty \binom{r}{k} x^{-k}
			\sum_{l=0}^\infty P[l,r-k,\tilde{\ba}]x^{-2l}
		=\sum_{j=0}^\infty \sum_{l=0}^{\lfloor\frac{j}2\rfloor}
			\binom{r}{j-2l} P[l,r+2l-j,\tilde{\ba}] x^{-j}
\end{align*}
 wherefrom it follows that
\be\label{relation-Pjrh}
	P[j,r,\bh] = (-1)^j	P[j,r,\tilde{\bh}].
\ee
Combining relations \eqref{relation-Pjrg} and \eqref{relation-Pjrh} with the definitions of $D$ and $S$ (\eqref{def-Dmnk} and \eqref{def-Smnk}) gives the relations \eqref{def-Dmnk-reduced} and \eqref{def-Smnk-reduced}.
\end{proof}

\end{section}

\pagebreak
\begin{section}{Main results}\label{section-main}
\setcounter{subsection}{0}

In all three notions, stable, stabilizable and stabilized, the similar composition appears.
The following theorem establishes the asymptotic expansion of the resultant mean--map of $K$, $M$ and $N$:
\be\label{eq-RKMN}
	R(x-t,x+t)\coloneqq \cR(K,N,M)(x-t,x+t)\sim\sum_{m=0}^\infty a_m^R t^{2m} x^{-2m+1}.
\ee
Afterwards, this composition will be used with $R=K=N=M$ to obtain the asymptotic expansion of stable mean $M$, with $R=N$ to obtain the asymptotic expansion of $(K,M)$-stabilizable mean $N$, and with $R=M$ to obtain the asymptotic expansion of $(K,N)$-stabilized mean $M$.

\begin{thm}\label{thm-main-RMNK}
Let homogeneous symmetric means $M$, $N$ and $K$ have the asymptotic expansions \eqref{def-MNK-M}, \eqref{def-MNK-N} and \eqref{def-MNK-K}. Then the coefficients $(a_m^R)_{m\in\bN_0}$ in the asymptotic expansion \eqref{eq-RKMN} are given by the formula:
\begin{align}
	a_m^R &=\sum_{n=0}^{m} a_n^K \sum_{k=0}^{m-n}
			P[k,2n,\bd] P[m-n-k,-2n+1,\bs], \quad m\in\bN_0,\label{thm-R-formula-aR}
\end{align}
where $\bd=(d_m)_{m\in\bN_0}$, $\bs=(s_m)_{m\in\bN_0}$, with
\begin{align}
	d_m &= -\frac12 \sum_{n=0}^{m} a_n^N 
		\sum_{k=0}^{2m+1-2n} P[k,2n,\bg^M]P[2m+1-2n-k,-2n+1,\bh^M], \quad m\in\bN_0,\label{def-dm}\\
	s_m &= \frac12\sum_{n=0}^{m} a_n^N 	\sum_{k=0}^{2m-2n} 
		 P[k,2n,\bg^M]P[2m-2n-k,-2n+1,\bh^M], \quad m\in\bN_0, \label{def-sm}
\end{align}
and $\bg^M$ and $\bh^M$ are defined as in \eqref{def-bgbh1} where $a_m=a_m^M$.
\end{thm}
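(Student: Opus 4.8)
The plan is to evaluate $\cR(K,N,M)(x-t,x+t)=K\bigl(N(x-t,u),N(u,x+t)\bigr)$, where $u=M(x-t,x+t)$, by peeling off the three means one at a time. The enabling observation is that, for a homogeneous symmetric mean $X$, the hypothesis $X(y-w,y+w)\sim\sum_n a_n^X w^{2n}y^{-2n+1}$ is (by homogeneity) equivalent to $X(1-\varepsilon,1+\varepsilon)\sim\sum_n a_n^X\varepsilon^{2n}$ as $\varepsilon\to0$; hence whenever we meet $X(a,b)$ with $a\le b$ and with $a,b$ themselves given as asymptotic series in $1/x$ with common leading term of order $x$, we may rewrite it as $X(y-w,y+w)$, $y=\tfrac{a+b}2$, $w=\tfrac{b-a}2$, substitute the series for $w/y$, and re-expand by the formal operations justified in Section \ref{section-Prelim}.

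First I would read off the inner data from $u\sim x+\sum_{n\ge1}a_n^M t^{2n}x^{-2n+1}$. A direct computation gives, for $N(x-t,u)$, the centre and half-width $y_1=\tfrac x2\,H^M(t/x)$ and $w_1=\tfrac t2\,G^M(t/x)$, and, for $N(u,x+t)$, $y_2=\tfrac x2\,\tilde H^M(t/x)$ and $w_2=\tfrac t2\,\tilde G^M(t/x)$, where $G^M,H^M,\tilde G^M,\tilde H^M$ are the generating functions of $\bg^M,\bh^M,\tilde{\bg}^M,\tilde{\bh}^M$. Substituting these into the expansion of $N$ and applying Lemma \ref{lemma-power} to $[G^M]^{2n}$ and $[H^M]^{-2n+1}$ (valid since the leading entries $1$ and $2$ are nonzero), I obtain $N(x-t,u)\sim\sum_q b_q t^q x^{-q+1}$ with $b_q=\tfrac12\sum_n a_n^N\sum_k P[k,2n,\bg^M]\,P[q-2n-k,-2n+1,\bh^M]$, and the same computation for $N(u,x+t)$ produces coefficients $\tilde b_q$ with $\tilde{\bg}^M,\tilde{\bh}^M$ in place of $\bg^M,\bh^M$. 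The sign relations \eqref{relation-Pjrg}--\eqref{relation-Pjrh} (equivalently, Lemma \ref{lemma-Dmnk-reduced} applied to $S(q,n,k)$ and $D(q,n,k)$) then give $\tilde b_q=(-1)^q b_q$, so the two inner evaluations share the same even part and carry opposite odd parts.

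Consequently the pair $\bigl(N(x-t,u),N(u,x+t)\bigr)$ has centre $Y=\tfrac12\bigl(N(x-t,u)+N(u,x+t)\bigr)=\sum_m s_m t^{2m}x^{-2m+1}$ and half-width $W=\sum_m d_m t^{2m+1}x^{-2m}$, where $s_m$ and $d_m$ are exactly \eqref{def-sm} and \eqref{def-dm}; this is the step at which those formulas arise, the $\mp\tfrac12$ prefactors coming from the $\tfrac t2,\tfrac x2$ in $w_i,y_i$ once Lemma \ref{lemma-Dmnk-reduced} has removed the parity-wrong terms. It then remains to apply the expansion of $K$ to $K(v_1,v_2)=K(Y-W,Y+W)\sim\sum_n a_n^K W^{2n}Y^{-2n+1}$. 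Writing $W=t\,\hat D(t^2/x^2)$ and $Y=x\,\hat S(t^2/x^2)$ with $\hat D,\hat S$ the generating functions of $\bd,\bs$, Lemma \ref{lemma-power} (applicable because $s_0=a_0^N\neq0$ and $d_0=\tfrac12 a_0^N\neq0$) gives $W^{2n}=\sum_l P[l,2n,\bd]\,t^{2n+2l}x^{-2l}$ and $Y^{-2n+1}=\sum_l P[l,-2n+1,\bs]\,t^{2l}x^{-2n+1-2l}$; multiplying these, collecting the monomial $t^{2m}x^{-2m+1}$ (that is, setting $n+l_1+l_2=m$ and $k=l_1$), and summing over $n$ reproduces \eqref{thm-R-formula-aR}. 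That no odd powers survive is automatic: the odd part lives entirely in $W$, which enters only through the even power $W^{2n}$.

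I expect the main labour to be bookkeeping rather than anything deep: verifying that $y_1,w_1$ and their tilded counterparts are governed precisely by $\bg^M,\bh^M$ and $\tilde{\bg}^M,\tilde{\bh}^M$, and keeping straight which summation index carries which power of $t$ versus $x$ through the repeated use of Lemma \ref{lemma-power}. The single conceptual lever is the parity identity $\tilde b_q=(-1)^q b_q$ from \eqref{relation-Pjrg}--\eqref{relation-Pjrh}, which is exactly what forces the centre $Y$ to depend only on $\bs$ and the half-width $W$ only on $\bd$. No estimates are needed: every step is a formal manipulation of asymptotic power series (Section \ref{section-Prelim}), together with the elementary fact that $a_0^M=a_0^N=a_0^K=1$ because each mean lies between $x-t$ and $x+t$---this is what makes every leading coefficient fed into Lemma \ref{lemma-power} nonzero.
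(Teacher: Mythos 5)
Your proposal is correct and follows essentially the same route as the paper: decompose each inner evaluation $N(x-t,u)$, $N(u,x+t)$ into centre/half-width governed by $\bh^M,\bg^M$ and $\tilde{\bh}^M,\tilde{\bg}^M$, invoke the parity relations \eqref{relation-Pjrg}--\eqref{relation-Pjrh} (Lemma \ref{lemma-Dmnk-reduced}) to identify the sum with $\bs$ and the difference with $\bd$, and finish by feeding $K(Y-W,Y+W)$ through Lemma \ref{lemma-power}. The intermediate quantities you call $Y$, $W$, $b_q$, $\tilde b_q$ are exactly the paper's $X$, $T$ and the coefficients in \eqref{thm-proof-Nleft}, \eqref{thm-proof-Nright}.
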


\begin{proof}
First, we shall start from the composition $N\big(x-t,M(x-t,x+t)\big)$. We write arguments $x-t$ and $M=M(x-t,x+t)$ in form of difference and sum of terms $\frac12(M-x+t)$ and $\frac12(M+x-t)$. Then we apply expansion \eqref{def-MNK-N}, substitute $M$ with its asymptotic expansion \eqref{def-MNK-M}, use Lemma \ref{lemma-power} and rearrange sums to obtain the following:
\begin{align}\label{thm-proof-Nleft}
	&N\big(x-t,M(x-t,x+t)\big) \\
		&=N\left(\tfrac12(M+x-t)-\tfrac12(M-x+t), \tfrac12(M+x-t)+\tfrac12(M-x+t)\right)\notag\\
		&\sim \sum_{n=0}^\infty a_n^N\left(\tfrac12(M-x+t)\right)^{2n}
			\left(\tfrac12(M+x-t)\right)^{-2n+1} \notag\\
		&\sim \frac12 \sum_{n=0}^\infty a_n^N  \biggl(1+ \sum_{k=1}^\infty a_k^M t^{2k-1}x^{-2k+1} \biggr)^{2n}
			 \biggl( 2-\frac{t}{x}+\sum_{j=1}^\infty a_j^M t^{2j}x^{-2j}\biggr)^{-2n+1} t^{2n} x^{-2n+1}\notag \\
		&\sim\frac12 \sum_{n=0}^\infty a_n^N 
			\sum_{k=0}^\infty P[k,2n,\bg^M]t^kx^{-k} 
			\sum_{j=0}^\infty P[j,-2n+1,\bh^M]t^jx^{-j} t^{2n} x^{-2n+1}\notag \\
		&\sim\frac12 \sum_{m=0}^\infty \sum_{n=0}^{\lfloor \frac{m}2 \rfloor} a_n^N 
			\sum_{k=0}^{m-2n} P[k,2n,\bg^M] P[m-2n-k,-2n+1,\bh^M] t^{m} x^{-m+1}. \notag
\end{align}
With similar procedure, we have
\begin{align}\label{thm-proof-Nright}
	&N\big(M(x-t,x+t),x+t\big) \sim \sum_{n=0}^\infty a_n^N\left(\tfrac12(x+t-M)\right)^{2n}
			\left(\tfrac12(x+t+M)\right)^{-2n+1}\\
		&\sim \frac12 \sum_{n=0}^\infty a_n^N \biggl(1- \sum_{k=1}^\infty a_k^M t^{2k-1}x^{-2k+1} \biggr)^{2n}
			 \biggl( 2+\frac{t}{x}+\sum_{j=1}^\infty a_j^M t^{2j}x^{-2j}\biggr)^{-2n+1} t^{2n} x^{-2n+1} \notag\\
		&\sim\frac12 \sum_{n=0}^\infty a_n ^N
			\sum_{k=0}^\infty P[k,2n,\tilde{\bg}^M]t^k x^{-k} 
			\sum_{j=0}^\infty P[j,-2n+1,\tilde{\bh}^M]t^j x^{-j} t^{2n} x^{-2n+1} \notag\\
		&\sim\frac12 \sum_{m=0}^\infty \sum_{n=0}^{\lfloor \frac{m}2 \rfloor} a_n^N
			\sum_{k=0}^{m-2n} P[k,2n,\tilde{\bg}^M] P[m-2n-k,-2n+1,\tilde{\bh}^M] t^{m} x^{-m+1}.\notag
\end{align}
Now the left hand side in \eqref{eq-RKMN} we may write as
\be\label{thm-proof-R--K--X-T--X+T}
	K(X-T,X+T)=\sum_{n=0}^\infty a_n^K T^{2n} X^{-2n+1},
\ee
where $X$ and $T$ are such that their difference equals $N(x-t,M)$ and their sum equals $N(M,x+t)$, with $M=M(x-t,x+t)$.
We may further analyze $T$ and $X$. With use of \eqref{thm-proof-Nleft}, \eqref{thm-proof-Nright}, \eqref{def-Dmnk}, \eqref{def-Dmnk-reduced} and \eqref{def-dm} we obtain the following
\begin{align} \label{thm-proof-exp-T-dm}
	T&=\frac12\left( N(M(x-t,x+t),x+t)-N(x-t,M(x-t,x+t)) \right)\\
		&\sim \frac14 \sum_{m=0}^\infty \sum_{n=0}^{\lfloor \frac{m}2 \rfloor} a_n^N
			\sum_{k=0}^{m-2n} \bigl( P[k,2n,\tilde{\bg}^M] P[m-2n-k,-2n+1,\tilde{\bh}^M] \notag\\
				&\qquad \qquad \qquad \qquad -P[k,2n,\bg^M]P[m-2n-k,-2n+1,\bh^M] \bigr)
			 t^{m} x^{-m+1} \notag\\ 
		&\sim\frac14 \sum_{m=0}^\infty \sum_{n=0}^{\lfloor \frac{m}2 \rfloor} a_n^N 	\sum_{k=0}^{m-2n} D(m,n,k) t^{m} x^{-m+1}\notag\\
		&\sim\frac14 \sum_{m=0}^\infty \sum_{n=0}^{m} a_n^N 	\sum_{k=0}^{2m+1-2n} D(2m+1,n,k) t^{2m+1} x^{-2m}\notag\\
		& \sim -\frac12 \sum_{m=0}^\infty \sum_{n=0}^{m} a_n^N 	
			\sum_{k=0}^{2m+1-2n} P[k,2n,\bg^M]P[2m+1-2n-k,-2n+1,\bh^M]t^{2m+1} x^{-2m}\notag\\
		&  \sim\sum_{m=0}^\infty d_m t^{2m+1} x^{-2m} \notag
\end{align}
and similarly, with use of \eqref{thm-proof-Nleft}, \eqref{thm-proof-Nright}, \eqref{def-Smnk}, \eqref{def-Smnk-reduced} and \eqref{def-sm} we obtain the following
\begin{align}\label{thm-proof-exp-X-sm}
	X&=\frac12\left( N(M(x-t,x+t),x+t)+N(x-t,M(x-t,x+t)) \right)\\
		&\sim\frac14 \sum_{m=0}^\infty \sum_{n=0}^{\lfloor \frac{m}2 \rfloor} a_n^N 
			\sum_{k=0}^{m-2n} \bigl( P[k,2n,\tilde{\bg}^M] P[m-2n-k,-2n+1,\tilde{\bh}^M] \notag \\
				&\qquad \qquad \qquad \qquad+P[k,2n,\bg^M]P[m-2n-k,-2n+1,\bh^M] \bigr)
			 t^{m} x^{-m+1} \notag\\ 
		&\sim\frac14 \sum_{m=0}^\infty \sum_{n=0}^{\lfloor \frac{m}2 \rfloor} a_n^N 	\sum_{k=0}^{m-2n} S(m,n,k)  t^{m} x^{-m+1}\notag\\
	 &\sim\frac14 \sum_{m=0}^\infty \sum_{n=0}^{m} a_n^N  	\sum_{k=0}^{2m-2n} S(2m,n,k)  t^{2m} x^{-2m+1}\notag\\
		&\sim \frac12 \sum_{m=0}^\infty \sum_{n=0}^{m} a_n^N  	\sum_{k=0}^{2m-2n}
			 P[k,2n,\bg^M]P[2m-2n-k,-2n+1,\bh^M]t^{2m} x^{-2m+1}\notag\\
		&\sim\sum_{m=0}^\infty s_m t^{2m} x^{-2m+1}. \notag
\end{align}

Finally, from \eqref{eq-RKMN}, by including expansions of $K$ \eqref{def-MNK-K}, $T$ \eqref{thm-proof-exp-T-dm} and $X$ \eqref{thm-proof-exp-X-sm} in \eqref{thm-proof-R--K--X-T--X+T},
then using Lemma \ref{lemma-power} and rearranging sums we obtain:
\begin{align}
	R&= K(X-T,X+T)\notag \\
	&\sim\sum_{n=0}^\infty a_n^K \biggl( \sum_{k=0}^\infty d_k t^{2k+1} x^{-2k}\biggr)^{2n}
		\biggl(  \sum_{j=0}^\infty s_j t^{2j} x^{-2j+1} \biggr)^{-2n+1}\notag\\
		&\sim\sum_{n=0}^\infty a_n^K \left( \sum_{k=0}^\infty d_k t^{2k} x^{-2k}\right)^{2n}
		\biggl(  \sum_{j=0}^\infty s_j t^{2j} x^{-2j} \biggr)^{-2n+1} t^{2n} x^{-2n+1} \notag\\
		&\sim\sum_{n=0}^\infty a_n^K  
			\sum_{k=0}^\infty P[k,2n,\bd] t^{2k} x^{-2k}
			\sum_{j=0}^\infty P[j,-2n+1,\bs] t^{2j} x^{-2j}
			 t^{2n} x^{-2n+1} \notag\\
		&\sim\sum_{m=0}^\infty \sum_{n=0}^{m} a_n^K \sum_{k=0}^{m-n}
			P[k,2n,\bd] P[m-n-k,-2n+1,\bs]t^{2m} x^{-2m+1}. \notag  
\end{align} 
\end{proof}

\begin{rem}
	Asymptotic expansion of the composite mean $K(N_1,N_2)$ has been derived 
	by Burić and Elezović (\cite{BurEl-2017}) for two types of asymptotic power series, general ($\sum_{n=0}^\infty \gamma_n t^n x^{-n+1}$) in Theorem 2.1.\ and symmetric ($\sum_{n=0}^\infty \gamma_n t^{2n} x^{-2n+1}$) in Theorem 2.2. In our case means $N_1(s,t)=N(s,M(s,t))$ and $N_2(s,t)=N(M(s,t),t)$ are not symmetric and would require non-symmetric treatment. But due to specificity of the means $N_1$ and $N_2$, their difference is antisymmetric, their sum is symmetric and in the end the composition $K(N_1,N_2)$ is symmetric for symmetric means $K$, $N$ and $M$. Symmetric form of the asymptotic expansion of $K(N_1,N_2)$ would be difficult to deduce just by applying Theorem 2.1.\ from the above mentioned paper so we needed to conduct the similar procedure from scratch in order to obtain the desired result.
\end{rem}

According to Theorem \ref{thm-main-RMNK}, first few coefficients $a^R_m$ are as follows:
\be\label{list-coeff-Thm2-aR}
\begin{aligned}
 a^R_0 &=1, \\
 a^R_1 &=\frac14 (a^K_1+2 a^M_1+a^N_1), \\ 
 a^R_2 &=\frac1{16} (a^K_2+8 a^M_2+a^N_1+2 a^M_1 (1+2 a^M_1) a^N_1 \\
 		&\qquad-a^K_1 (3 a^N_1+a^M_1 (2+8 a^N_1))+a^N_2), \\
 a^R_3 &=\frac1{64}(a^K_3+32 a^M_3+(1-2 a^M_1 (1+2 a^M_1)^2+8 a^M_2+32 a^M_1 a^M_2) a^N_1\\
 &\qquad-a^K_2 (7 a^N_1+2 a^M_1 (3+8 a^N_1))+a^K_1 (a^N_1 (-3+4 a^N_1)-8 a^M_2 (1+4 a^N_1) \\
 &\qquad+4 (a^M_1)^2 (1+a^N_1) (1+4 a^N_1)+2 a^M_1 (a^N_1 (3+8 a^N_1)-8 a^N_2)-7 a^N_2)\\
 &\qquad+6 a^N_2+6 a^M_1 (3+4 a^M_1) a^N_2+a^N_3).
\end{aligned}
\ee


\begin{subsection}{Stable means}
In order to obtain asymptotic expansion of stable mean $M$ we need to use Theorem \ref{thm-main-RMNK} with $R=N=K=M$. The idea is to express coefficient $a_m=a_m^M$ using lower terms, i.e.\ in form of recursive relation.

\begin{thm}\label{thm-main}
Let homogeneous symmetric stable mean $M$ have the asymptotic expansion \eqref{mean-asymexp} with $a_m=a^M_m$. Then $a_0=1$, $a_1\in\bR$ and for $m\ge2$ coefficients $a_m$ are given by the recursive formula:
\be\label{thm-main-stable-am}
\begin{aligned}
	a_m  &= \frac{2^{2m-1}}{2^{2m-2}-1} \biggl(
			 \frac12\sum_{n=1}^{m-1} a_n 	\sum_{k=0}^{2m-2n} 
			 	P[k,2n,\bg]P[2m-2n-k,-2n+1,\bh] \\
			 &\qquad+\sum_{n=1}^{m-1} a_n \sum_{k=0}^{m-n}P[k,2n,\bd] P[m-n-k,-2n+1,\bs]\biggr),
			 \quad m\ge2,
\end{aligned}
\ee
where $\bg$ and $\bh$ are defined in \eqref{def-bgbh1} and $\bd$ and $\bs$ are defined by \eqref{def-dm} and \eqref{def-sm}.

\end{thm}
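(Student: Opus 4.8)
The plan is to specialize Theorem~\ref{thm-main-RMNK} to the case $R=N=K=M$ and then solve for $a_m$ by isolating its occurrences on the right-hand side. First I would substitute $a_n^K=a_n^N=a_n^M=a_n$ into \eqref{thm-R-formula-aR}, so that the stability condition $\cR(M,M,M)=M$ becomes the identity $a_m=a_m^R$ for all $m\in\bN_0$. The cases $m=0,1$ are immediate from the list \eqref{list-coeff-Thm2-aR}: $a_0^R=1$ forces $a_0=1$, and $a_1^R=\frac14(a_1+2a_1+a_1)=a_1$ is automatically satisfied, leaving $a_1$ a free real parameter. For $m\ge2$ the point is that $a_m$ appears on the right-hand side of $a_m=a_m^R$ only in a few controlled places, and collecting those terms yields a linear equation in $a_m$ whose coefficient is $1-2^{-(2m-2)}$ up to normalization, giving the stated formula after multiplying through by $2^{2m-1}/(2^{2m-2}-1)$.

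The key computational step is to track exactly where $a_m$ enters $a_m^R$. In the sum $s_m$ of \eqref{def-sm} the top term $n=m$ contributes $a_m$ times $P[0,2m,\bg]P[0,-2m+1,\bh]=1\cdot 2^{-2m+1}$, i.e.\ a term $2^{-2m}a_m$; similarly the $n=m$ term of $d_m$ in \eqref{def-dm} involves $a_m$ but $d_m$ only feeds into $a_m^R$ through even powers $P[k,2n,\bd]$ with $2n\ge2$, so at order $m$ it contributes nothing linear in the \emph{top} coefficient (this is the usual phenomenon that the antisymmetric part starts at higher order). On the $K$-side, the $n=m$ term of \eqref{thm-R-formula-aR} is $a_m^K\,P[0,0,\bd]P[0,1,\bs]=a_m$, and the $n=0$ term is $\sum_{k=0}^m P[k,0,\bd]P[m-k,1,\bs]=s_m$, which by the previous remark contributes $2^{-2m}a_m$ plus lower-order data. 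Also the $\bg,\bh$-sequences in \eqref{def-bgbh1} only involve $a_1,\dots$ in shifted positions, so $P[k,2n,\bg]$ and $P[k,2n,\bh]$ for the indices appearing in the double sum $\sum_{k}P[k,2n,\bg]P[2m-2n-k,-2n+1,\bh]$ with $1\le n\le m-1$ depend only on $a_1,\dots,a_{m-1}$; the same holds for $\bd,\bs$ truncated at $n\le m-1$. Hence $a_m=a_m^R$ reads
$$
a_m=\tfrac12\,2^{-2m}a_m+\tfrac12\!\sum_{n=1}^{m-1}a_n\!\sum_{k=0}^{2m-2n}\!P[k,2n,\bg]P[2m-2n-k,-2n+1,\bh]+2^{-2m}a_m+\!\sum_{n=1}^{m-1}a_n\!\sum_{k=0}^{m-n}\!P[k,2n,\bd]P[m-n-k,-2n+1,\bs],
$$
and I would double-check the coefficient of $a_m$: the $n=m$ term of the outer sum gives $a_m$, the $n=0$ term gives $s_m$ whose linear-in-$a_m$ part is $\tfrac12\cdot 2^{-2m+1}a_m=2^{-2m}a_m$, and there is no further appearance, so transposing gives $(1-2^{-2m})a_m$ on the left\,---\,wait, one must be careful, since the $\tfrac12 s_m$ written above is not a separate summand of $a_m^R$ but is \emph{inside} $s_m$; the correct bookkeeping is that $a_m^R$ contains $a_m$ with total coefficient $2^{-2m+1}$ (once from $n=m$ via $a_m^K$, and once from $n=0$ via the $s_m$-term's top coefficient), so the equation is $a_m(1-2^{-2m+1}\cdot\frac12\cdots)$\,---\,the precise constant $2^{2m-2}-1$ in the denominator is what the careful count must reproduce, and matching it against \eqref{list-coeff-Thm2-aR} for $m=2$ (where $a_2=\frac{8}{3}(\cdots)$, consistent with $2^{2m-1}/(2^{2m-2}-1)=8/3$) is the sanity check I would run.

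Concretely, the proof proceeds as follows. I would state that $a_0=1$ and $a_1^R=a_1$ hold identically, so only $m\ge2$ requires work; then write $a_m^R$ from \eqref{thm-R-formula-aR} with all superscripts equal to $M$, split the outer $\sum_{n=0}^m$ into the $n=0$ term, the $n=m$ term, and the middle range $1\le n\le m-1$; expand $s_m$ and $d_m$ correspondingly, splitting their inner sums at $n=m$, $1\le n\le m-1$, and $n=0$ (the $n=0$ term of $d_m$ and $s_m$ is just the expansion of $M$ itself contributing $P[\cdot]$-values that are $\delta$'s or $a^M$'s, which combine with the identity part); observe via Lemma~\ref{lemma-power}, the remarks on $P[0,r,\ba]=a_0^r$, $P[n,1,\ba]=a_n$, $P[n,0,\ba]=\delta_n$, and the fact that $\bg,\bh,\bd,\bs$ only ``see'' $a_1,\dots,a_{m-1}$ in the relevant index ranges, that every term other than the isolated $a_m$'s depends only on $a_1,\dots,a_{m-1}$; collect the $a_m$ terms, divide, and obtain \eqref{thm-main-stable-am}. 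The main obstacle I anticipate is precisely the careful extraction of the coefficient of $a_m$ and the verification that no other hidden copy of $a_m$ (for instance through $P[1,2,\bg]$, or through an $n=m-1,\ k=2$ term, or through $d_m$'s even-power feed into $P[\cdot,2n,\bd]$) survives\,---\,one has to argue degree/parity reasons that all such would-be contributions sit at order $x^{-2m+\varepsilon}$ with $\varepsilon>0$ or vanish by the $\bg\leftrightarrow\tilde\bg$, $\bh\leftrightarrow\tilde\bh$ sign relations of Lemma~\ref{lemma-Dmnk-reduced}; getting the constant $2^{2m-2}-1$ exactly right, rather than off by a factor of two, is the delicate part, and I would pin it down by cross-checking $m=2$ and $m=3$ against \eqref{list-coeff-Thm2-aR}.
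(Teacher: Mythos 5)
Your overall strategy is exactly the paper's: specialize Theorem~\ref{thm-main-RMNK} to $K=N=M$, locate every occurrence of $a_m$ on the right-hand side of $a_m=a_m^R$, and solve the resulting linear equation. The structural claims you make along the way (that $d_k$ for $k\le m-1$, and the terms $P[k,2n,\bg]$, $P[2m-2n-k,-2n+1,\bh]$ with $1\le n\le m-1$, involve only $a_1,\ldots,a_{m-1}$) are correct and match the paper's $i_{max}$ bookkeeping. But the central step --- the total coefficient of $a_m$ inside $a_m^R$ --- is carried out incorrectly, and that coefficient is the entire content of the theorem, since it is where the prefactor $\tfrac{2^{2m-1}}{2^{2m-2}-1}$ comes from.

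There are three contributions and you only find fragments of two. First, the $n=m$ term of the outer sum in \eqref{thm-R-formula-aR} is $a_m\,P[0,2m,\bd]\,P[0,-2m+1,\bs]=a_m\,d_0^{2m}s_0^{-2m+1}=2^{-2m}a_m$ (since $d_0=\tfrac12$, $s_0=1$); you write it as $a_m\,P[0,0,\bd]P[0,1,\bs]=a_m$, with the wrong exponents. Second, the $n=0$ outer term contributes $s_m$, whose $n=m$ sub-term gives $\tfrac12 a_m P[0,2m,\bg]P[0,-2m+1,\bh]=2^{-2m}a_m$; this one you record. Third --- the copy you miss entirely --- the $n=0$ sub-term of $s_m$ is $\tfrac12\sum_{k=0}^{2m}P[k,0,\bg]P[2m-k,1,\bh]=\tfrac12 h_{2m}=\tfrac12 a_m$, because $\bh=(2,-1,a_1,0,a_2,0,\ldots)$ has $h_{2m}=a_m$. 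The correct total is $\bigl(\tfrac12+2^{-2m+1}\bigr)a_m$, so transposing gives $\bigl(1-\tfrac12-2^{-2m+1}\bigr)a_m=\tfrac{2^{2m-2}-1}{2^{2m-1}}\,a_m$ and hence the stated formula. Your two competing counts ($\tfrac32\cdot2^{-2m}$ in the displayed equation, $2^{-2m+1}$ in the subsequent correction) yield the wrong constants $\tfrac{2^{2m+1}}{2^{2m+1}-3}$ and $\tfrac{2^{2m-1}}{2^{2m-1}-1}$, i.e.\ $\tfrac{32}{29}$ or $\tfrac87$ at $m=2$ instead of $\tfrac83$. You explicitly defer the resolution to a sanity check against \eqref{list-coeff-Thm2-aR}, but a deferred check is not a proof: as written, the argument does not close, and the missing idea is precisely the identification of the hidden $\tfrac12 a_m$ coming from $h_{2m}$ in the $n=0$ part of $s_m$.
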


\begin{proof}
Proof is based on definition relation \eqref{def-stable} with variables $x-t$ and $x+t$:
\be\label{def-stable-xt}
	M(x-t,x+t)=M\bigl( M(x-t,M(x-t,x+t)),M(M(x-t,x+t),x+t) \bigr).
\ee
Proof is divided into three parts. The asymptotic expansions of the left hand side has the form \eqref{mean-asymexp} while the coefficients in the asymptotic expansion of the right hand side will be obtained as a consequence Theorem \ref{thm-main-RMNK}. Then term $a_m$ with the highest index will be identified. The corresponding coefficients will be equated wherefrom  the recursive formula \eqref{thm-main-stable-am} will be deduced.\\
\noindent
{\bf I Asymptotic expansion of the right-hand side of \eqref{def-stable-xt}.} 
From Theorem \ref{thm-main-RMNK}, with $a_m=a_m^M=a_m^N=a_m^K$, we have
\be\label{thm-stable-proof-am}
	a_m^R =\sum_{n=0}^{m} a_n \sum_{k=0}^{m-n}
			P[k,2n,\bd] P[m-n-k,-2n+1,\bs], \quad m\in\bN_0,
\ee
where
\be
	d_m = -\frac12 \sum_{n=0}^{m} a_n
		\sum_{k=0}^{2m+1-2n} P[k,2n,\bg]P[2m+1-2n-k,-2n+1,\bh], \quad m\in\bN_0,\label{thm-stable-proof-dm}
\ee
and
\be
	s_m = \frac12\sum_{n=0}^{m} a_n 	\sum_{k=0}^{2m-2n} 
		 P[k,2n,\bg]P[2m-2n-k,-2n+1,\bh], \quad m\in\bN_0. \label{thm-stable-proof-sm}
\ee
\noindent
{\bf II Extracting higher indexed term $a_m$.} 
In this step of the proof we shall detect the higher indexed term of the sequence $\ba$ contained in $a_m^R$. Simple computations reveal that $a_0^R=1$ and $a_1^R=a_1$ as can also be seen from the list of coefficients \eqref{list-coeff-Thm2-aR}. For $m\ge2$ we may split the sum on the right hand side of \eqref{thm-stable-proof-am} into three parts, $n=0$, $n\in\{1,\ldots,m-1\}$ and $n=m$:
\begin{align*}
	a_m^R 
		&= a_0 \sum_{k=0}^{m}P[k,0,\bd] P[m-k,1,\bs]\\			
			&\qquad+\sum_{n=1}^{m-1} a_n \sum_{k=0}^{m-n}P[k,2n,\bd] P[m-n-k,-2n+1,\bs]\\
			&\qquad+ a_m P[0,2m,\bd] P[0,-2m+1,\bs].
\end{align*}
According to Remark \ref{remark-powerlemma}, $P[k,0,\bd] =\delta_k$, $P[m,1,\bs] =s_m$,
	$P[0,2m,\bd]=d_0^{2m}$ and $P[0,-2m+1,\bs]=s_0^{-2m+1}$ and hence
\bes
	a_m^R  = a_0 s_m +\sum_{n=1}^{m-1} a_n \sum_{k=0}^{m-n}P[k,2n,\bd] P[m-n-k,-2n+1,\bs]
			  + a_m d_0^{2m} s_0^{-2m+1}.
\ees
Using formula 
\eqref{thm-stable-proof-sm}
for $s_m$ and substituting $a_0$, $d_0$ and $s_0$ with $1$, $\frac12$ and $1$ respectively, we obtain
\begin{align*}
	a_m^R  &=  \frac12\sum_{n=0}^{m} a_n \sum_{k=0}^{2m-2n}  P[k,2n,\bg]P[2m-2n-k,-2n+1,\bh]\\
			 &\qquad+\sum_{n=1}^{m-1} a_n \sum_{k=0}^{m-n}P[k,2n,\bd] P[m-n-k,-2n+1,\bs]
			 + 2^{-2m} a_m.
\end{align*}
We will split the first sum into three parts, $n=0$, $n\in\{1,\ldots,m-1\}$ and $n=m$, where for $n=0$ we have
\bes
	\frac12 a_0 \sum_{k=0}^{2m} P[k,0,\bg]P[2m-k,1,\bh]
	= \frac12 a_0 \sum_{k=0}^{2m} h_{2m}\delta_k
	= \frac12 a_0 \sum_{k=0}^{2m} a_{m}\delta_k
	= \frac12 a_m,
\ees
and for $n=m$ we have
\bes
	\frac12 a_m P[0,2m,\bg]P[0,-2m+1,\bh]= \frac12 a_m g_0^{2m} h_0^{-2m+1}=a_m 2^{-2m}.
\ees
Now we continue to analyze $a_m^R$ with those information included and terms with $a_m$ grouped together:
\be\label{racun-rm-5}
\begin{aligned}
	a_m^R  &= (2^{-1}+2^{-2m+1}) a_m\\
			&\qquad+\frac12\sum_{n=1}^{m-1} a_n \sum_{k=0}^{2m-2n} P[k,2n,\bg]P[2m-2n-k,-2n+1,\bh] \\
			 &\qquad+\sum_{n=1}^{m-1} a_n \sum_{k=0}^{m-n}P[k,2n,\bd] P[m-n-k,-2n+1,\bs].
\end{aligned}
\ee

Let $i_{max}(\cdot)$ denote the highest index $i$ such that $a_i$ appears within term inside the parenthesis. That is, 
\bes
	i_{max}(g_k)=\lfloor \frac{k+1}2 \rfloor,
	\qquad i_{max}(h_k)=\lfloor \frac{k}2 \rfloor.
\ees

In \eqref{racun-rm-5}, $P[k,2n,\bg]$ according to Lemma \ref{lemma-power} depends only on finite sequence $(g_0,\ldots,g_{k})$ and hence
\bes
	i_{max}(P[k,2n,\bg])=\max_{j\in\{0,\ldots,k\}} ( i_{max}(g_j)) = \lfloor \frac{k+1}2 \rfloor.
\ees
Also $P[2m-2n-k,-2n+1,\bh]$ from the same formula depends only on finite sequence $(h_0,\ldots,h_{2m-2n-k})$ and hence
\begin{align*}
	i_{max}&(P[2m-2n-k,-2n+1,\bh]) 
		  =\max_{j\in\{0,\ldots,2m-2n-k\}} ( i_{max}(h_j)) 
		=\lfloor m-n -\frac{k}2 \rfloor.
\end{align*}

The highest index that  appears in sum in the second row of \eqref{racun-rm-5} 
is
\begin{align*}
	&i_{max}\biggl(\sum_{n=1}^{m-1} a_n \sum_{k=0}^{2m-2n} P[k,2n,\bg]P[2m-2n-k,-2n+1,\bh] \biggr) \\
		&\quad \le \max_{\substack{n\in\{1,\ldots,m-1\} \\ 
					k\in\{0,\ldots,2m-2n\}}} 
					(m-1, i_{max}(P[k,2n,\bg]),i_{max}(P[2m-2n-k,-2n+1,\bh])\\
		&\quad = \max_{\substack{n\in\{1,\ldots,m-1\} \\ 
		k\in\{0,\ldots,2m-2n\}}} 
		\biggl(m-1,\lfloor \frac{k+1}2 \rfloor,\lfloor m-n -\frac{k}2 \rfloor\biggr)  
		=m-1.
\end{align*}

Regarding the third row of \eqref{racun-rm-5} first we observe $d_m$.
From formula \eqref{thm-stable-proof-dm} we have
\begin{align*}
	-2 d_m &= a_0 \sum_{k=0}^{2m+1} P[k,0,\bg]P[2m+1-k,1,\bh]\\
		 & \qquad+ \sum_{n=1}^{m} a_n \sum_{k=0}^{2m+1-2n} P[k,2n,\bg]P[2m+1-2n-k,-2n+1,\bh]\\
		 &= h_{2m+1} + \sum_{n=1}^{m} a_n \sum_{k=0}^{2m+1-2n} P[k,2n,\bg]P[2m+1-2n-k,-2n+1,\bh],
\end{align*}
and hence 
\bes
	i_{max}(d_m)\le\max(i_{max}(h_{2m+1}),m,i_{max}(g_{2m-1}),i_{max}(h_{2m-1}))=m.
\ees
From discussion before we may also see that
\bes
	i_{max}(s_m)=m.
\ees
Combining derived relations finally gives
\begin{align*}
	i_{max}&\biggl(\sum_{n=1}^{m-1} a_n 
		\sum_{k=0}^{m-n}P[k,2n,\bd] P[m-n-k,-2n+1,\bs] \biggr)\\
		&\qquad \le\max_{\substack{ 
			n\in\{1,\ldots,m-1\}\\k\in\{0,\ldots,m-n\}}}
			(m-1,i_{max}(d_k),i_{max}(s_{m-n-k}))\\
		&\qquad \le \max_{\substack{ 
			n\in\{1,\ldots,m-1\}\\k\in\{0,\ldots,m-n\}}}
			(m-1,k,m-n-k)\\
		&\qquad = m-1.
\end{align*}

\noindent
{\bf III Equating coefficients from the left-hand and right-hand side of \eqref{def-stable-xt}.}
For a stable mean $M$, the expansions of the left and right side in \eqref{def-stable-xt} must be equal, that is, $a_m=a_m^R$ for $m\in\bN_0$. Coefficient $a_0^R=1$ which is in agreement with property \eqref{def-mean-minmax}. Next, we have free coefficient $a_1$.
Furthermore, the connection between $a_m^R$ and $a_m$ for $m\ge2$ in \eqref{racun-rm-5} is also is linear so equating those coefficients \eqref{racun-rm-5} finally gives the relation \eqref{thm-main-stable-am}
which completes the proof.
\end{proof}


For a convenience, we give here first few coefficients $a_m^R$:
\be\label{list-coeff-aR-stable}
\begin{aligned}
	a_0^R&=1, \\
	a_1^R&=a_1,\\
	a_2^R&= \tfrac1{16}a_1(1+a_1)(1-4a_1)+\tfrac58a_2, \\
	a_3^R&= \tfrac1{64} ((1+a_1) (a_1 (1+2 a_1 (-3+6 a_1+8 a_1^2)-8 a_2)+6 a_2))+\tfrac{17}{32}a_3,\\
	a_4^R&= \tfrac1{256} (-56 a_1^5-48 a_1^6+33 a_2^2+24 a_1^4 (1+10 a_2)\\
		&\qquad +a_1^3 (22+300 a_2)+15 (a_2+a_3) +3 a_1^2 (-3+8 a_2+4 a_3) \\
		&\qquad+a_1 (1+3 a_2 (-7+32 a_2)+18 a_3))+ \tfrac{65}{128}a_4.
\end{aligned}
\ee

For a stable mean coefficients $a_m^R$ must be equal to $a_m$. Using \eqref{thm-main-stable-am} we obtain asymptotic expansion of a stable mean. With successive substitutions done all the subsequent coefficients can be seen as polynomials in variable $a_1$. Asymptotic expansion up to five terms of a bi-variate, symmetric, homogeneous stable mean in variables $(x-t,x+t)$ has the form:
\be\label{exp-Mtt-stable}
\begin{aligned}
	&M (x-t,x+t)= x+ a_1 t^2x^{-1} 
		+  \tfrac1{6}a_1(1+a_1)(1-4a_1) t^4x^{-3}  \\
		&\quad+ \tfrac1{90} a_1(1+a_1)(6-31a_1+36a_1^2+64a_1^3) t^6x^{-5}  \\
		&\quad+ \tfrac1{2520} a_1 (1+a_1) \bigl(90 - 531 a_1  + 937 a_1 ^2 + 568 a_1 ^3 - 3088 a_1 ^4 - 2176 a_1^5 \bigr) t^8x^{-7}  \\
		&\quad +\O(x^{-9}).
\end{aligned}
\ee

\begin{prop}\label{prop-tt-st}
	Bi-variate homogeneous symmetric mean $M$ with asymptotic expansion \eqref{mean-asymexp} has the asymptotic expansion \eqref{mean-asymexp-st} where for $s\neq \pm t$
	\bes
		a_m(s,t) = 2^{-m} \sum_{n=0}^{\lfloor \frac{m}2 \rfloor} 
		a_n \binom{1-2n}{m-2n} \left( {t-s}\right)^{2n}
			\left({t+s} \right)^{m-2n}, 
		\quad m\in\bN_0.
	\ees
\end{prop}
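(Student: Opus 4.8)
The plan is to start from the symmetric expansion \eqref{mean-asymexp} and perform the change of variables that converts it into the general form \eqref{mean-asymexp-st}. Writing $x+s = y - \tau$ and $x+t = y + \tau$ with $y = x + \tfrac{s+t}{2}$ and $\tau = \tfrac{t-s}{2}$, one has $M(x+s,x+t) = M(y-\tau, y+\tau)$, so by \eqref{mean-asymexp}
\bes
	M(x+s,x+t) \sim \sum_{n=0}^\infty a_n \tau^{2n} y^{-2n+1}
	= \sum_{n=0}^\infty a_n \left(\tfrac{t-s}{2}\right)^{2n} \left(x + \tfrac{s+t}{2}\right)^{-2n+1}.
\ees
The next step is to expand each factor $\left(x+\tfrac{s+t}{2}\right)^{-2n+1}$ as an asymptotic power series in $x^{-1}$. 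This is exactly the binomial expansion, and it can be justified rigorously by Lemma \ref{lemma-power} applied to the two-term sequence $(1, \tfrac{s+t}{2}, 0, 0, \ldots)$ with exponent $r = -2n+1$; alternatively one simply invokes the generalized binomial series $(x + c)^{r} = \sum_{j\ge 0} \binom{r}{j} c^{j} x^{r-j}$, valid as an asymptotic expansion for $x\to\infty$. Thus
\bes
	\left(x + \tfrac{s+t}{2}\right)^{-2n+1} \sim \sum_{j=0}^\infty \binom{1-2n}{j} \left(\tfrac{s+t}{2}\right)^{j} x^{-2n+1-j}.
\ees

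Substituting this into the double series and collecting the coefficient of $x^{-m+1}$ gives the result. Concretely, a term $a_n \left(\tfrac{t-s}{2}\right)^{2n} \binom{1-2n}{j}\left(\tfrac{s+t}{2}\right)^{j} x^{-2n+1-j}$ contributes to the $x^{-m+1}$ coefficient precisely when $2n + j = m$, i.e. $j = m - 2n$, which forces $0 \le n \le \lfloor m/2 \rfloor$. Summing over the admissible $n$ yields
\bes
	a_m(s,t) = \sum_{n=0}^{\lfloor m/2 \rfloor} a_n \binom{1-2n}{m-2n} \left(\tfrac{t-s}{2}\right)^{2n} \left(\tfrac{s+t}{2}\right)^{m-2n}
	= 2^{-m}\sum_{n=0}^{\lfloor m/2 \rfloor} a_n \binom{1-2n}{m-2n}(t-s)^{2n}(t+s)^{m-2n},
\ees
since the powers of $\tfrac12$ combine to $2^{-2n}\cdot 2^{-(m-2n)} = 2^{-m}$. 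This is the claimed formula. As a consistency check one recovers $a_0(s,t) = 1$ and $a_{2n}(-t,t) = a_n$ (the stated relation $a_n \equiv a_{2n}(-t,t)$): setting $s=-t$ kills every term with $m - 2n > 0$ because of the factor $(t+s)^{m-2n}$, leaving only $n = m/2$ when $m$ is even.

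The only genuine subtlety — and the point deserving a sentence of care rather than a real obstacle — is the rearrangement of the double asymptotic series: one must argue that substituting an asymptotic expansion of $\left(x+\tfrac{s+t}{2}\right)^{-2n+1}$ into the outer series, and reindexing by the total power of $x^{-1}$, is a legitimate operation producing an asymptotic expansion. This is covered by the general principles on formal substitution and rearrangement in asymptotic power series cited in Section \ref{section-Prelim} (\cite[p.~20]{Erd}), since for each fixed $m$ only finitely many $(n,j)$ pairs contribute to the coefficient of $x^{-m+1}$; the restriction $s \ne \pm t$ is not actually needed for the formula itself but is kept because at $s = \pm t$ the function $M(x+s,x+t)$ on the $x^{1}$ coefficient degenerates and the expansion \eqref{mean-asymexp-st} is interpreted through \eqref{mean-asymexp} directly. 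Everything else is the routine binomial bookkeeping indicated above.
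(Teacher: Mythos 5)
Your proof is correct and follows essentially the same route as the paper: the substitution $y=x+\tfrac{s+t}{2}$, $\tau=\tfrac{t-s}{2}$ is exactly the paper's $\alpha$, $\beta$ change of variables, followed by the same binomial expansion of $\left(x+\tfrac{s+t}{2}\right)^{-2n+1}$ and collection of the coefficient of $x^{-m+1}$ via $j=m-2n$. The added consistency check and the remark on rearrangement are fine but not needed beyond what the paper already does.
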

\begin{proof}
Let $\alpha=\frac{t+s}2$ and $\beta=\frac{t-s}2$. Then
\begin{align*}
	M (x+s,x+t) &= M\left(x+\a-\b,x+\a+\b\right)\\
	&= \sum_{n=0}^\infty a_n  \b^{2n} \left(x+\a\right)^{-2n+1}\\
	&= \sum_{n=0}^\infty a_n \b^{2n}x^{-2n+1}\sum_{k=0}^\infty\binom{-2n+1}{k}\a^kx^{-k}\\
	&= \sum_{m=0}^\infty \sum_{n=0}^{\lfloor \frac{m}2 \rfloor} 
		a_n \binom{1-2n}{m-2n} \b^{2n} \a^{m-2n} x^{-m+1}
\end{align*}
and the proof is complete.
\end{proof}

Combining Proposition \ref{prop-tt-st} with asymptotic expansion \eqref{exp-Mtt-stable} we get the following result.

\begin{cor}
	For a bi-variate homogeneous symmetric stable mean $M$ with asymptotic expansion \eqref{mean-asymexp} holds
	\bes
\begin{aligned}
	M&(x+s,x+t)= x+ \tfrac12(s+t)
		+\tfrac14 (s-t)^2 a_1  x^{-1} 
		-\tfrac18(s-t)^2(s+t) a_1 x^{-2} \\
		& + \tfrac1{16} \left( a_1  \left(s^2-t^2\right)^2-\tfrac{1}{6} a_1  (a_1 +1) (4 a_1 -1) (s-t)^4\right) x^{-3} \\
		& +\tfrac1{64} a_1  (s-t)^2 (s+t) \left((a_1 +1) (4 a_1 -1) (s-t)^2-2 (s+t)^2\right) x^{-4} \\
		& +\tfrac1{64} \Bigl( (s-t)^2 (s+t)^4 a_1 -(s-t)^4 (s+t)^2 a_1  (1+a_1 ) (-1+4 a_1 )\\
			&\quad + \tfrac1{90} (s-t)^6 a_1  (1+a_1 ) (6+a_1  (-31+4 a_1  (9+16 a_1 ))) \Bigr) x^{-5}
		 +\O(x^{-6}).
\end{aligned}
\ees
\end{cor}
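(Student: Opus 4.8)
The plan is to read off the coefficients of the stable-mean expansion and feed them into the change of variables supplied by Proposition~\ref{prop-tt-st}.

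First I would invoke Proposition~\ref{prop-tt-st}: for a homogeneous symmetric mean with expansion \eqref{mean-asymexp}, the coefficient of $x^{-m+1}$ in $M(x+s,x+t)$ equals
\[
	a_m(s,t)=2^{-m}\sum_{n=0}^{\lfloor m/2\rfloor} a_n\binom{1-2n}{m-2n}(t-s)^{2n}(t+s)^{m-2n},
\]
with $a_n=a_n^M$ from \eqref{mean-asymexp}. Since the corollary records the expansion through order $x^{-5}$, I only need $a_m(s,t)$ for $m=0,1,\dots,6$, and these involve $a_0,a_1,a_2,a_3$ alone, because $n$ runs only up to $\lfloor m/2\rfloor\le 3$.

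Next I would evaluate the relevant binomial coefficients. The key observation is that the $n=0$ term carries the factor $\binom{1}{m}$, which vanishes for $m\ge2$; this explains why the only terms of the resulting expansion that are not $\O(x^{-1})$ come from $m=0$ and $m=1$, yielding $x+\tfrac12(s+t)$, and why every higher coefficient $a_m(s,t)$ is divisible by $(s-t)^2$. The remaining coefficients $\binom{-1}{k}$, $\binom{-3}{k}$, $\binom{-5}{0}$ for the relevant $k$ are elementary and give, for instance, $a_2(s,t)=\tfrac14 a_1(s-t)^2$, $a_3(s,t)=-\tfrac18 a_1(s-t)^2(s+t)$, $a_4(s,t)=\tfrac1{16}\bigl(a_1(s^2-t^2)^2+a_2(s-t)^4\bigr)$, and analogously for $m=5,6$.

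Finally I would substitute for $a_2$ and $a_3$ their expressions in $a_1$ obtained in \eqref{exp-Mtt-stable}, namely $a_2=\tfrac16 a_1(1+a_1)(1-4a_1)$ and $a_3=\tfrac1{90}a_1(1+a_1)(6-31a_1+36a_1^2+64a_1^3)$, and collect $\sum_{m=0}^{6} a_m(s,t)x^{-m+1}$. Simplifying — using $(t-s)^2(t+s)^2=(s^2-t^2)^2$ and the obvious evenness of $(t-s)^{2n}$ — produces the stated formula. The argument is purely computational; the only point requiring care is the bookkeeping of the negative-upper-index binomial coefficients and keeping signs consistent between $s-t$ and $t-s$ (which matters only in $(s+t)^{m-2n}$ with odd exponent, where $s+t$ is the genuine argument and no sign ambiguity occurs). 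I expect no substantive obstacle.
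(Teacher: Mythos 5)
Your proposal is correct and follows exactly the paper's route: the paper proves this corollary in one line by combining Proposition~\ref{prop-tt-st} with the stable-mean expansion \eqref{exp-Mtt-stable}, which is precisely the substitution-and-collection you carry out. Your explicit evaluation of the binomial coefficients (including the vanishing of $\binom{1}{m}$ for $m\ge2$) and the resulting $a_m(s,t)$ for $m\le6$ all check out against the stated coefficients.
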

\end{subsection}

\begin{subsection}{Stabilizable means}

\begin{thm}\label{thm-stabilizable}
Let homogeneous symmetric bi-variate means $M$, $N$ and $K$ have the asymptotic expansions \eqref{def-MNK-M}, \eqref{def-MNK-N} and \eqref{def-MNK-K}. Suppose $K$ and $M$ are stable means. Then the coefficients $(a_m^N)_{m\in\bN_0}$ in the asymptotic expansion \eqref{def-MNK-N} of $(K,M)$-stabilizable mean $N$ are given by:
\be\label{thm-stabilizable-coeff-amN}
\begin{aligned}
	a^N_0&=1,\\
	a_m^N &= \frac{2^{2m}}{2^{2m}-1}\biggl[ \frac12\sum_{n=0}^{m-1} a_n^N\sum_{k=0}^{2m-2n} P[k,2n,\bg^M]P[2m-2n-k,-2n+1,\bh^M]\\
		 	&\qquad+\sum_{n=1}^{m} a_n^K \sum_{k=0}^{m-n}P[k,2n,\bd] P[m-n-k,-2n+1,\bs] \biggr],\qquad m\in\bN,
\end{aligned}
\ee
where $\bg^M$ and $\bh^M$ are defined in \eqref{def-bgbh1} with $a_m=a_m^M$ and $\bd$ and $\bs$ are defined by \eqref{def-dm} and \eqref{def-sm}.
\end{thm}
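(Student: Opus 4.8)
The plan is to run the argument of Theorem~\ref{thm-main} with the self-referential equation \eqref{def-stable} replaced by the one-sided fixed-point equation defining a $(K,M)$-stabilizable mean, namely $N=\cR(K,N,M)$, i.e.\ $N(s,t)=K\bigl(N(s,M(s,t)),N(M(s,t),t)\bigr)$ (this is the instance of \eqref{def-stabilizable} for a $(K,M)$-stabilizable mean). Writing it with arguments $x-t$ and $x+t$, the right-hand side is precisely the resultant mean--map $R=\cR(K,N,M)$ treated in Theorem~\ref{thm-main-RMNK}, so $a_m^N=a_m^R$, and \eqref{thm-R-formula-aR} expresses $a_m^R$ through the sequences $\bd$, $\bs$ of \eqref{def-dm}, \eqref{def-sm} --- built from the still unknown coefficients $a_n^N$ and from the known coefficients $a_n^M$ via $\bg^M$, $\bh^M$ --- together with the known coefficients $a_n^K$ of the outer sum.

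The second step isolates, on the right-hand side of $a_m^N=a_m^R$, the contribution of the highest-indexed unknown $a_m^N$. Splitting the outer sum $\sum_{n=0}^m$ in \eqref{thm-R-formula-aR} off at $n=0$ gives, by Remark~\ref{remark-powerlemma}, the term $a_0^K\sum_{k=0}^m P[k,0,\bd]P[m-k,1,\bs]=s_m$, whereas every term with $1\le n\le m$ contains only $P[k,2n,\bd]$ and $P[m-n-k,-2n+1,\bs]$ with $k\le m-n\le m-1$, hence by Lemma~\ref{lemma-power} only $d_0,\dots,d_{m-1}$ and $s_0,\dots,s_{m-1}$, which by \eqref{def-dm}, \eqref{def-sm} depend only on $a_0^N,\dots,a_{m-1}^N$ (and on the known $a_n^K$, $a_n^M$). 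Inside $s_m$, the summand $n=m$ equals $\tfrac12 a_m^N P[0,2m,\bg^M]P[0,-2m+1,\bh^M]=2^{-2m}a_m^N$ (using $g_0^M=1$, $h_0^M=2$), the summand $n=0$ equals $\tfrac12 a_0^N P[2m,1,\bh^M]=\tfrac12 a_m^M$, and the summands $1\le n\le m-1$ carry $a_1^N,\dots,a_{m-1}^N$. Collecting these, $a_m^N=a_m^R$ becomes a linear equation in $a_m^N$ with coefficient $1-2^{-2m}\neq0$ for every $m\ge1$, and solving it yields \eqref{thm-stabilizable-coeff-amN}; the value $a_0^N=1$ is forced by \eqref{def-mean-minmax} (equivalently, $a_0^R=1$ in \eqref{list-coeff-Thm2-aR}). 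In contrast with the stable case of Theorem~\ref{thm-main}, no coefficient is left free: already $m=1$ determines $a_1^N$ in terms of $a_1^K$ and $a_1^M$.

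The only delicate point is the index bookkeeping of the second step --- confirming that $a_m^N$ enters the right-hand side with the single coefficient $2^{-2m}$ and through no other summand --- which is the $i_{max}$-estimate of Part~II of the proof of Theorem~\ref{thm-main}, and is in fact lighter here because $K$ and $M$ are fixed, so their coefficients never raise the index. Finally, the hypothesis that $K$ and $M$ are stable is not used in deriving \eqref{thm-stabilizable-coeff-amN} beyond its role in the definition of a $(K,M)$-stabilizable mean; the recursion rests only on the symmetric expansions \eqref{def-MNK-K}, \eqref{def-MNK-M}, \eqref{def-MNK-N} and on the functional equation $N=\cR(K,N,M)$.
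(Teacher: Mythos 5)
Your proposal is correct and follows essentially the same route as the paper: specialize Theorem~\ref{thm-main-RMNK} to $R=N$, peel off the $n=0$ term of the outer sum to expose $s_m$, extract the $n=m$ summand $2^{-2m}a_m^N$ from $s_m$ via Remark~\ref{remark-powerlemma}, and solve the resulting linear equation. Your explicit check that the remaining terms involve only $d_0,\dots,d_{m-1}$, $s_0,\dots,s_{m-1}$ and hence only $a_0^N,\dots,a_{m-1}^N$ is a worthwhile detail the paper leaves implicit, but it does not change the argument.
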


\begin{proof}
From Theorem \ref{thm-main-RMNK}, with $R=N$ and thereby $a_m^R=a_m^N$, $m\in\bN_0$, we have
\be\label{thm-stabilizable-proof-aN-1}
	a_m^N =\sum_{n=0}^{m} a_n^K \sum_{k=0}^{m-n}
			P[k,2n,\bd] P[m-n-k,-2n+1,\bs], \quad m\in\bN_0.
\ee
For $m=0$ in equation \eqref{thm-stabilizable-proof-aN-1} we obtain that $a^N_0=1$ as expected.
Now let $m>0$. First, we split the sum in two parts, for $n=0$ and $n\in\{1,\ldots,m\}$:
\bes
		a_m^N 	= a_0^K \sum_{k=0}^{m}P[k,0,\bd] P[m-k,1,\bs]
				+\sum_{n=1}^{m} a_n^K \sum_{k=0}^{m-n}P[k,2n,\bd] P[m-n-k,-2n+1,\bs].
\ees
According to Remark \ref{remark-powerlemma}, the first part reduces and we have
\bes
		a_m^N 	= s_m+\sum_{n=1}^{m} a_n^K \sum_{k=0}^{m-n}P[k,2n,\bd] P[m-n-k,-2n+1,\bs].
\ees
Now we use formula \eqref{def-sm} for $s_m$
\begin{align*}
		a_m^N 	&= \frac12\sum_{n=0}^{m} a_n^N 	\sum_{k=0}^{2m-2n} P[k,2n,\bg^M]P[2m-2n-k,-2n+1,\bh^M]\\
		 	&\qquad+\sum_{n=1}^{m} a_n^K \sum_{k=0}^{m-n}P[k,2n,\bd] P[m-n-k,-2n+1,\bs],
\end{align*}
then split the sum into two parts, for $n=m$ and $n\in\{0,\ldots,m-1\}$, and apply conclusions from Remark \ref{remark-powerlemma}. That is,
\begin{align*}
		a_m^N 
		 	&=\frac12 a_m^N P[0,2m,\bg^M]P[0,-2m+1,\bh^M]\\
		 	&\qquad+  \frac12\sum_{n=0}^{m-1} a_n^N\sum_{k=0}^{2m-2n} P[k,2n,\bg^M]P[2m-2n-k,-2n+1,\bh^M]\\
		 	&\qquad+\sum_{n=1}^{m} a_n^K \sum_{k=0}^{m-n}P[k,2n,\bd] P[m-n-k,-2n+1,\bs],
\end{align*}
which with $g_0=1$ and $h_0=2$ reads as
\begin{align*}
		a_m^N
		 	&=2^{-2m} a_m^N 
		 	+  \frac12\sum_{n=0}^{m-1} a_n^N\sum_{k=0}^{2m-2n} P[k,2n,\bg^M]P[2m-2n-k,-2n+1,\bh^M]\\
		 	&\qquad+\sum_{n=1}^{m} a_n^K \sum_{k=0}^{m-n}P[k,2n,\bd] P[m-n-k,-2n+1,\bs]
\end{align*}
wherefrom \eqref{thm-stabilizable-coeff-amN} easily follows.
\end{proof}

Using formula \eqref{thm-stabilizable-coeff-amN} from Theorem \ref{thm-stabilizable} we obtain the coefficients of a mean $N$ such that \eqref{def-stabilizable} holds for given means $K$ and $M$. Here is a list of first few such coefficients:
\begin{align*}
 a^N_0 &=1,\\
 a^N_1 &=\frac13 \big( a^K_1+2a^M_1 \big), \\ 
 a^N_2 &=\frac{1}{45} \big(-2 a^K_1 (6 a^M_1+5) a^M_1-(a^K_1)^2 (8 a^M_1+3)+a^K_1+3 a^K_2\\
 	&\qquad+8 (a^M_1)^3+4 (a^M_1)^2+2 a^M_1+24 a^M_2\big) \\
 a^N_3 &= \frac1{2835} \big( 
 	(a^K_1)^3 \bigl(8 a^M_1 (26 a^M_1+23)+41\bigr) \\
 	&\qquad +2 (a^K_1)^2 \bigl(a^M_1 (4 a^M_1 (40 a^M_1+81)+61) -5 (48 a^M_2+7)\bigr) \\
 	&\qquad -a^K_1 \bigl(18 a^K_2 (16 a^M_1+7)+408 a^M_2+16 a^M_1 (2 a^M_1 (a^M_1 (3 a^M_1-7)-1)\\
 	&\qquad+54 a^M_2+11)-21\bigr) 
 	  -6 a^K_2 \bigl(a^M_1 (68 a^M_1+71)-3\bigr)+45 a^K_3\\
 	  &\qquad +6 a^M_1 \bigl(32 (a^M_1)^4-12 (a^M_1)^2  
 	 +16 (16 a^M_1+7) a^M_2+7\bigr) 
 	 +144 (a^M_2+10 a^M_3) \big).
\end{align*}

If we require that mean $N$ is $(K,M)$-stabilizable then we must take into account stability of $K$ and $M$.
When means $K$ and $M$ are stable, coefficients $a_m^K$ and $a_m^M$, $m\ge2$, can be expressed as polynomials in variables $a_1^K$ and $a_1^M$ respectively. From expansion \eqref{exp-Mtt-stable} used with coefficients $a_m^K$ and  $a_m^M$ we obtain coefficients $a_m^N$ with $K$ and $M$ stable:
\be\label{thm-stabilizable-list-coeff-2}
\begin{aligned}
 a^N_0 &=1,\\
 a^N_1 &=\frac13 \left(a^K_1+2a^M_1 \right), \\ 
 a^N_2 &= \frac{1}{90} \big( -(a^K_1)^2 (16 a_1^M+9)+a_1^K(3-4a_1^M(6a_1^M+5))\\
 	&\qquad-4(a_1^K)^3-4a_1^M (4a_1^M (a_1^M+1)-3)\big)\\
 a^N_3 &=\frac{1}{5670} \big(64 (a^K_1)^5 + 4 (a_1^K)^4 (67 + 96 a_1^M)\\
 	&\qquad + 12 a_1^M(-1 + 2 a_1^M) (3 + 2 a_1^M) (-9 + 8 a_1^M (1 + a_1^M)) \\
  &\qquad+3 (a_1^K)^3 (63 + 8 a_1^M (51 + 40 a_1^M)) \\
  &\qquad+ (a_1^K)^2 (-225 + 2 a_1^M (207 + 4 a_1^M (273 + 160 a_1^M))) \\
  &\qquad+2 a_1^K (27 + a_1^M (-315 + 8 a_1^M (3 + 4 a_1^M (29 + 15 a_1^M)))) \big).
\end{aligned}
\ee
\end{subsection}

\begin{subsection}{Stabilized means}

\begin{thm}\label{thm-stabilized}
Let homogeneous symmetric bi-variate means $M$, $N$ and $K$ have the asymptotic expansions \eqref{def-MNK-M}, \eqref{def-MNK-N} and \eqref{def-MNK-K}. Suppose $K$ and $N$ are stable means. Then the coefficients $(a_m^M)_{m\in\bN_0}$ in the asymptotic expansion \eqref{def-MNK-M} of $(K,N)$-stabilized mean $M$ are given by:
\be\label{thm-formula-stabilized-aM}
\begin{aligned}
	a_0^M&=1,\\
	a_m^M &=\sum_{n=1}^{m} a_n^N\sum_{k=0}^{2m-2n} P[k,2n,\bg^M]P[2m-2n-k,-2n+1,\bh^M]\\
		 &\qquad+ 2\sum_{n=1}^{m} a_n^K \sum_{k=0}^{m-n}P[k,2n,\bd] P[m-n-k,-2n+1,\bs],\quad m\in\bN,
\end{aligned}
\ee
where $\bg^M$ and $\bh^M$ are given by \eqref{def-bgbh1} with $a_m=a_m^M$ and $\bd$ and $\bs$ are defined by \eqref{def-dm} and \eqref{def-sm}.
\end{thm}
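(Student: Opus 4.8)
The plan is to imitate the proof of Theorem \ref{thm-stabilizable}, but starting from the equation $M = \cR(K,N,M)$, i.e.\ from the defining relation \eqref{def-stabilized} written with variables $x-t$ and $x+t$:
\be
	M(x-t,x+t) = K\bigl( N(x-t,M(x-t,x+t)),N(M(x-t,x+t),x+t) \bigr). \notag
\ee
Here the outer mean is $K$, the middle mean (the one whose arguments are $x\pm t$ and $M$) is $N$, and the inner mean is $M$ itself. So in the notation of Theorem \ref{thm-main-RMNK} we apply it with $R=M$, keeping $K$ and $N$ as they are but putting the unknown mean $M$ in the inner slot. Consequently the auxiliary sequences $\bg^M$, $\bh^M$, $\bd$, $\bs$ are exactly the ones defined by \eqref{def-bgbh1}, \eqref{def-dm}, \eqref{def-sm} with $a_m=a_m^M$ the unknowns and $a_m^N$ appearing as coefficients in $\bd,\bs$; the outer sum in \eqref{thm-R-formula-aR} carries $a_n^K$. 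This gives
\be
	a_m^M = a_m^R = \sum_{n=0}^{m} a_n^K \sum_{k=0}^{m-n} P[k,2n,\bd]P[m-n-k,-2n+1,\bs], \quad m\in\bN_0. \notag
\ee

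Next I would isolate the highest-indexed occurrence of $a_m^M$ on the right-hand side, exactly as in the proofs of Theorems \ref{thm-main} and \ref{thm-stabilizable}. For $m=0$ the formula gives $a_0^M=1$. For $m\ge1$, split the outer sum into the $n=0$ term and $n\in\{1,\ldots,m\}$; by Remark \ref{remark-powerlemma}, $P[k,0,\bd]=\delta_k$ and $P[m,1,\bs]=s_m$, so the $n=0$ term collapses to $s_m$. Then substitute the defining formula \eqref{def-sm} for $s_m$ and split its outer sum (over $n$ from $0$ to $m$) into the $n=m$ term and $n\in\{0,\ldots,m-1\}$. The $n=m$ term is $\tfrac12 a_m^N$ times $P[0,2m,\bg^M]P[0,-2m+1,\bh^M]=g_0^{2m}h_0^{-2m+1}=2^{-2m}$; but note this carries $a_m^N$, not $a_m^M$ — with $N$ stable, $a_m^N$ is already determined, so it does \emph{not} need to be moved to the left. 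The genuine $a_m^M$ dependence on the right comes only through $\bg^M$, $\bh^M$ inside the sums over $n\le m-1$ and through $\bd,\bs$: and here is where the index bookkeeping from the proof of Theorem \ref{thm-main} is reused verbatim, namely $i_{max}(P[k,2n,\bg^M])=\lfloor\frac{k+1}2\rfloor$ and $i_{max}(P[2m-2n-k,-2n+1,\bh^M])=\lfloor m-n-\frac k2\rfloor$, giving total index $\le m-1$ in those sums, and likewise $i_{max}(d_k),i_{max}(s_{m-n-k})\le m-1$ for the $\bd,\bs$ sum when $n\ge1$. So no term $a_m^M$ appears on the right at all for the stabilized equation; the relation for $a_m^M$ is already explicit, with everything on the right of index $\le m-1$ in the $M$-sequence and known $a_n^N,a_n^K$ otherwise. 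Reassembling — the $n=m$ piece of \eqref{def-sm} contributes $\tfrac12 a_m^N\cdot 2^{-2m}$ and the $n$-from-$1$-to-$m$ piece of the $\bd,\bs$ sum survives — and multiplying the whole equation by $2$ gives \eqref{thm-formula-stabilized-aM}.

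The only subtlety, and the step I expect to require the most care, is the coefficient out front: in Theorems \ref{thm-main} and \ref{thm-stabilizable} one obtains a factor of the form $(1-2^{-2m+c})$ multiplying the unknown on both sides, which must be inverted, whereas here — because the inner mean is $M$ but the \emph{defining} identity already has $M$ alone on the left — one should check that the $a_m^M$-terms that would otherwise be produced by $P[0,2m,\bd]P[0,-2m+1,\bs]=d_0^{2m}s_0^{-2m+1}$ in the $n=m$ slot of the outer $a_n^K$-sum actually involve $a_m^N$ (through $d_0=\tfrac12 a_0^N\cdot(\cdots)$, $s_0=a_0^N$) and not $a_m^M$, so that $d_0=\tfrac12$, $s_0=1$ are constants and that term simply adds $a_m^K\cdot 2^{-2m}\cdot(\text{known})$; more precisely $d_0,s_0$ do not depend on $a_m^M$ for any $m\ge1$, since $d_0=-\tfrac12 a_0^N h_0 = -\tfrac12\cdot\tfrac12\cdot\text{wait}$ — one verifies directly from \eqref{def-dm}, \eqref{def-sm} that $d_0=\tfrac12$ up to sign conventions and $s_0=1$, independent of the $M$-sequence. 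Hence no division is needed and the factor is simply $1$ on the left and the right assembles as stated. I would double-check the index counting for $d_m$ (as in the proof of Theorem \ref{thm-main}, $-2d_m = h_{2m+1}+\sum_{n=1}^m a_n^N(\cdots)$ so $i_{max}(d_m)\le m$) to be sure the factor in front of $a_m^M$ really is $1$ and not $1-2^{-2m+1}$ or similar; this single arithmetic check is the crux, after which \eqref{thm-formula-stabilized-aM} follows by the same mechanical rearrangement as before.
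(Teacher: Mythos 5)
Your setup follows the paper's route exactly (apply Theorem \ref{thm-main-RMNK} with $R=M$, peel off the $n=0$ term of the outer $K$-sum to get $s_m$, then expand $s_m$ via \eqref{def-sm}), but there is a genuine error at precisely the step you flagged as the crux. You claim that after removing the $n=m$ term of \eqref{def-sm}, the remaining range $n\in\{0,\dots,m-1\}$ involves only $M$-indices $\le m-1$, so that ``no term $a_m^M$ appears on the right at all.'' This fails at $n=0$: there the contribution is $\tfrac12 a_0^N\sum_{k=0}^{2m}P[k,0,\bg^M]P[2m-k,1,\bh^M]=\tfrac12 h^M_{2m}=\tfrac12 a_m^M$, because $\bh^M=(2,-1,a_1^M,0,a_2^M,0,\dots)$ gives $h^M_{2m}=a_m^M$ for $m\ge1$. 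The bound $i_{max}\le m-1$ that you quote from the proof of Theorem \ref{thm-main} was established there only for the range $n\in\{1,\dots,m-1\}$, i.e.\ after \emph{both} the $n=0$ and $n=m$ terms had been split off; indeed your own formula $i_{max}(P[2m-2n-k,-2n+1,\bh^M])=\lfloor m-n-\tfrac k2\rfloor$ evaluates to $m$ at $n=k=0$. (You correctly isolated the $n=m$ term, but that one carries the known $a_m^N$; it is the $n=0$ term that is self-referential here.)

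Consequently the relation is not explicit: it reads $a_m^M=\tfrac12 a_m^M+\tfrac12\sum_{n=1}^m a_n^N(\cdots)+\sum_{n=1}^m a_n^K(\cdots)$, and one must subtract $\tfrac12 a_m^M$ and multiply by $2$. That is exactly where the factor $2$ in front of the $a_n^K$-sum in \eqref{thm-formula-stabilized-aM} comes from, and why the $a_n^N$-sum appears with coefficient $1$ rather than $\tfrac12$. In your write-up the concluding ``multiplying the whole equation by $2$'' is unmotivated: if the right-hand side truly contained no $a_m^M$, doubling would leave $2a_m^M$ on the left and put a spurious factor $2$ on the $N$-sum as well. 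The correct observation (and the paper's) is that the self-referential term has coefficient exactly $\tfrac12$, independent of $m$ --- unlike the $m$-dependent factors $1-2^{-2m}$ and $\tfrac12-2^{-2m+1}$ in Theorems \ref{thm-stabilizable} and \ref{thm-main} --- so the required ``division'' is simply multiplication by $2$. With that single correction your argument becomes the paper's proof.
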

\begin{proof}
Using formula \eqref{thm-R-formula-aR} from Theorem \ref{thm-main-RMNK}, with $R=M$ and thereby $a_m^R=a_m^M$, $m\in\bN_0$, we obtain
\bes
	a_m^M =\sum_{n=0}^{m} a_n^K \sum_{k=0}^{m-n} P[k,2n,\bd] P[m-n-k,-2n+1,\bs].
\ees
For $m=0$ we obtain $a_0^M=1$ as expected. Now let $m>0$. On the right hand side for $n=0$ we have 
	$a_0^K \sum_{k=0}^{m} P[k,0,\bd] P[m-k,1,\bs]$ where $a_0^K=1$, $P[k,0,\bd]=\delta_k$ and $P[m,1,\bs]=s_m$. Hence, the sum can be written as
\bes
	a_m^M 	=s_m + \sum_{n=1}^{m} a_n^K \sum_{k=0}^{m-n}P[k,2n,\bd] P[m-n-k,-2n+1,\bs].
\ees
After replacing $s_m$ according to definition \eqref{def-sm} and splitting it into two parts, for $n=0$ and $n\ge1$, we obtain
\begin{align*}
		a_m^M 	&=\frac12\sum_{n=0}^{m} a_n^N\sum_{k=0}^{2m-2n} P[k,2n,\bg^M]P[2m-2n-k,-2n+1,\bh^M]\\
		 &\qquad+ \sum_{n=1}^{m} a_n^K \sum_{k=0}^{m-n}P[k,2n,\bd] P[m-n-k,-2n+1,\bs]\\
		&=\frac12 a_m^M 
			+ \frac12\sum_{n=1}^{m} a_n^N\sum_{k=0}^{2m-2n} P[k,2n,\bg^M]P[2m-2n-k,-2n+1,\bh^M]\\
		&\qquad + \sum_{n=1}^{m} a_n^K \sum_{k=0}^{m-n}P[k,2n,\bd] P[m-n-k,-2n+1,\bs]
\end{align*}
which after subtracting $\frac12 a_m^M $ and multiplying by 2 gives the desired result \eqref{thm-formula-stabilized-aM}.
\end{proof}

Using formula \eqref{thm-formula-stabilized-aM} from Theorem \ref{thm-stabilized} we obtain coefficients in the asymptotic expansion of a mean $M$ such that equation \eqref{def-stabilized} holds for a given means $K$ and $N$. Here are first few of them:
\begin{align*}
 a^M_0 &=1,\\
 a^M_1 &=\frac12 \big(a_1^K+a_1^N), \\ 
 a^M_2 &=\frac18 (a^K_2+a^N_1+(a^N_1)^2+(a^N_1)^3-a^K_1 a^N_1 (3+2 a^N_1)-(a^K_1)^2 (1+3 a^N_1)+a^N_2\big),\\
 a^M_3 &=\frac1{32} (a^K_3+(a^K_1)^3 (1+2 a^N_1) (2+5 a^N_1)+(a^K_1)^2 (a^N_1 (5+6 a^N_1 (3+a^N_1))-2 a^N_2) \\
 	&\quad-a^K_1 (2 a^K_2 (2+5 a^N_1)+a^N_1 (5+a^N_1 (2+a^N_1+2 (a^N_1)^2)-2 a^N_2)-a^N_2) +6 a^N_2\\
 	&\quad+a^N_1 (1-3 a^K_2 (3+2 a^N_1)+10 a^N_2+a^N_1 (a^N_1+2 (a^N_1)^2 (1+a^N_1)+8 a^N_2))+a^N_3).
\end{align*}

In order $M$ to be $(K,N)$-stabilized, means $K$ and $N$ need to be stable and therefore their coefficients obey the rule of stable means coefficients \eqref{exp-Mtt-stable} and can be expressed trough $a_1^K$ and $a_1^N$. Here are the first few coefficients in the asymptotic expansion of $(K,N)$-stabilized mean $M$:

\be\label{thm-stabilized-list-coeff-2}
\begin{aligned}
 a^M_0 &=1,\\
 a^M_1 &=\frac12 \big(a_1^K+a_1^N), \\ 
 a^M_2 &=\frac1{48}\big(-4 (a^K_1)^3 - 9 (a^K_1)^2 (1 + 2 a^N_1)  \\
  	&\qquad + a^K_1 (1 - 6 a^N_1 (3 + 2 a^N_1)) +a^N_1 (7 + a^N_1 (3 + 2 a^N_1))\big), \\
 a^M_3 &= \frac{1}{2880} \big(64 (a^K_1)^5 + 20 (a^K_1)^4 (17 + 30 a^N_1) \\
 	&\qquad + 5 (a^K_1)^3 (73 + 36 a^N_1 (10 + 7 a^N_1))\\
  &\qquad+ 5 (a^K_1)^2 (-17 + 3 a^N_1 (45 + 44 a^N_1 (3 + a^N_1)))\\
  &\qquad - 3 a^K_1 (-2 + 5 a^N_1 (38 + a^N_1 (19 + 4 a^N_1 (4 + 5 a^N_1))))\\
  &\qquad - a^N_1 (-186 + a^N_1 (145 + a^N_1 (595 + 4 a^N_1 (170 + 59 a^N_1))))\big).
\end{aligned}
\ee
\end{subsection}
\end{section}

\begin{section}{Simultaneously stabilizable and stabilized} \label{section-simultaneous}
In this section we will show how Theorems from Section 3 may be used to derive some necessary conditions in following interesting cases.

\setcounter{subsection}{0}
\begin{subsection}{Simultaneously $(K,M)$-stabilizable and $(M,K)$-stabilizable}

Let $K$ and $M$ be two stable means. If $N$ is $(K,M)$-stabilizable and $(M,K)$-stabilizable, then
\bes
	\frac13(a_1^K+2a_1^M)=a_1^N =\frac13(a_1^M+2a_1^K)
\ees
wherefrom it follows that $a_1^N=a_1^M=a_1^K$ and, because asymptotic expansion of stable mean is completely determined by the coefficient with index 1, it holds that $a_m^M=a_m^K$ for all $m\in\bN_0$.
Calculating the next coefficients from the list \eqref{thm-stabilizable-list-coeff-2} reveals that
\begin{align*}
	a_2^N&=\frac16 a_1^N (1+a_1^N) (1-4 a_1^N),\\
	a_3^N&=\frac1{90} a_1^N (1+ a_1^N) (6-31 a_1^N +36 (a_1^N)^2 +64(a_1^N)^3),
\end{align*}
which have the form of stable mean coefficients \eqref{exp-Mtt-stable} suggesting $N$ should also be stable.

\end{subsection}

\begin{subsection}{Simultaneously $(K,N)$-stabilized and $(N,K)$-stabilized}
Let $K$ and $N$ be two stable means. Assume mean $M$ is simultaneously $(K,N)$-stabilized and $(N,K)$-stabilized. Observe the list of coefficients \eqref{thm-stabilized-list-coeff-2}. Coefficient $a_1^M$ is symmetric in $K$ and $N$.
For $m=2$ the following equality must hold
	\begin{align*}
		a_2^M&=\frac1{48} (-4 (a^K_1)^3-9 (a^K_1)^2 (1+2 a^N_1)+a^K_1 (1-6 a^N_1(3+2 a^N_1))\\
			&\qquad+a^N_1 (7+a^N_1(3+2 a^N_1)))\\
			&=\frac1{48} (-4 (a^N_1)^3-9 (a^N_1)^2 (1+2 a^K_1)+a^N_1 (1-6 a^K_1 (3+2 a^K_1))\\
				&\qquad+a^K_1 (7+a^K_1(3+2 a^K_1)))
	\end{align*}
wherefrom it follows
\bes
	-\frac18 (a^K_1 - a^N_1) (1 + a^K_1 + a^N_1)^2=0.
\ees
We have two possibilities: $a^K_1=a^N_1$ and $a^K_1+a^N_1=-1$. 
If the first one holds, then
	\begin{align*}
		a^M_1 &= a^N_1=a^K_1,\\
		a^M_2 &
		=\frac16 a^M_1 (1+a^M_1) (1-4 a^M_1),
	\end{align*}
which suggests that $M$ is should be stable and therefore (asymptotically) equal to $K$ and $N$. 
If the second condition holds, then simple computation yield
$$a^M_2=-\frac1{2},\ a^M_2=-\frac1{8},\ a^M_3=-\frac1{16},\ a^M_4=-\frac5{128},\
a_5^M=-\frac7{256}.$$
These coefficients correspond to first coefficients in the asymptotic expansion of geometric mean which can be found in \cite{ElVu-2014-09}. This correspondence suggests that geometric mean is the only simultaneously $(K,N)$-stabilized and $(N,K)$-stabilized for  different stable means $K$ and $N$. In that case an interesting equation appears. If $G$ is $(K,N)$-stabilized, using homogeneity of $N$ and $K$
\begin{align*}
	G(s,t)&=K\bigl( N(s,\sqrt{st}),N(\sqrt{st},t) \bigr) 
		 =K\bigl(\sqrt{s} N(\sqrt{s},\sqrt{t}),\sqrt{t} N(\sqrt{s},\sqrt{t}) \bigr)\\
		&= N(\sqrt{s},\sqrt{t})  K(\sqrt{s},\sqrt{t})
\end{align*}
which implies that $G$ is also $(N,K)$-stabilized.
Additionally, by including $s^2$ and $t^2$ instead of $s$ and $t$, for $s,t>0$, we obtain
\bes
	st=G(s^2,t^2)=N(s,t)K(s,t)
\ees
and taking the square root the following equation follows
\bes
	G=G(N,K)
\ees
which is also known as Gauss functional equation which defines compound mean obtained with Gauss iterative procedure (\cite[Ch.VI.3]{Bull-2003}) indicating that geometric mean $G$ is also compound mean of $N$ and $K$, i.e.\ $G=N\otimes K$.

\end{subsection}

\begin{subsection}{Simultaneously $(K,N)$-stabilizable and $(K,N)$-stabilized}

Modifying formula \eqref{def-stabilizable} by interchanging $M$ and $N$, from the list \eqref{thm-stabilizable-list-coeff-2} we read first few coefficients of $(K,N)$-stabilizable mean $M$ and by comparison with coefficients from list \eqref{thm-stabilized-list-coeff-2} we obtain the necessary conditions for mean $M$ to be $(K,N)$-stabilizable and also $(K,N)$-stabilized. For $m=1$ we have 
\bes
	\frac13 \left(a^K_1+2a^N_1 \right)= a^M_1=\frac12 \big(a_1^K+a_1^N)
\ees
 and hence $a_1^K=a_1^N= a^M_1$. Since $K$ and $N$ are stable means and their asymptotic expansions are completely determined by $a_1^K$ and $a_1^N$ it follows that $a_m^K=a_m^N$ for all $m\in\bN_0$.
Then, either by using that $M$ is $(K,N)$-stabilizable or $M$ is $(K,N)$-stabilized, after replacing $a_1^K$ and $a_1^N$ by $a^M_1$, we obtain coefficients:
\begin{align*}
 a^M_2 &= \frac16 a^M_1 (1 + a^M_1) (1 - 4 a^M_1),\\
 a^M_3 &=\frac1{90} a_1^M (1+ a_1^M) (6-31 a_1^M +36 (a_1^M)^2 +64(a_1^M)^3),
\end{align*}
 which are stable mean coefficients indicating that $M$ should also be stable and (asymptotically) equal to $K$ and $N$.
\end{subsection}
\end{section}

\begin{section}{Examples and applications} \label{section-examples}
Based on the results of Theorem \ref{thm-main}, we may easily see which are the necessary conditions for some mean to be stable. This can be useful especially in the case of parametric means such as Stolarsky and Gini means mentioned in the Introduction. In the paper \cite{ElVu-2014-04} we have derived asymptotic expansion of some one and two parameter classes of means which will be used to solve the open problem of Ra\"{i}ssouli and to demonstrate the application of the main Theorem. More about the mathematical means  can be found in  \cite{Bull-2003}.

\setcounter{subsection}{0}
\begin{subsection}{Power means}

The $r$-th power mean is defined for all $s,t>0$ by
        \bes
                B_r(s,t)=
		\begin{cases}
		\displaystyle
		\biggl(\frac{s^r+t^r}{2}\biggr)^{1/r},\qquad &r\ne0,\\
		\sqrt{st},&r=0.
		\end{cases}
        \ees
	For example, the special cases of this mean are arithmetic mean $A=B_1$, quadratic mean $Q=B_2$ and harmonic mean $H=B_{-1}$. Geometric mean $G=B_0$ is obtained as limit case of $B_r$ as $r\to0$.
	Easy computations reveal that power mean $M_r$ is stable for every $r$.
	
In paper \cite{ElVu-2014-04}, with $\alpha=0$ and $\beta=t$, we find the asymptotic expansion of the $r$-th power mean:
\be\label{examples-exp-power}
	B_r(x-t,x+t)\sim x +\tfrac12(r-1)t^2 x^{-1}-\tfrac{1}{24}(r-1)(r+1)(2r-3)t^4x^{-3}+\O(x^{-5}).
\ee	
Asymptotic behavior of $n$-variable power means has been studied in \cite	{ElVu-2016-01}.
Since power mean is stable these coefficients satisfy the recursive formula \eqref{thm-main-stable-am}.
\end{subsection}

\begin{subsection}{Gini means}
The Gini means are defined for all $s,t>0$ by
\bes
	G_{p,r}(s,t)= \begin{cases}
		\left( \dfrac{s^p+t^p}{s^r+t^r} \right)^{\frac{1}{p-r}},& p\neq r,\\
		\exp \left( \dfrac{s^p\log s +t^p\log t}{s^p+t^p} \right), &p=r\neq 0,\\
		\sqrt{st},&p=r=0.
	\end{cases}
\ees
for parameters $p$ and $r$. These means were first introduced by  Gini (\cite{Gini-1938}). 
Power means belong to the class of power means as $G_{0,r}=B_r$ and also Lehmer mean $G_{r+1,r}$ is  special case of Gini mean.

In paper \cite{ElVu-2014-04}, with $\alpha=0$ and $\beta=t$, we find the asymptotic expansion of Gini means:
\bes
	\begin{aligned}
	G_{p,r}(x-t,x+t)&\sim x+\tfrac12(p+r-1)t^2 x^{-1} 
		+\tfrac{1}{24} \bigl[(-3-2 p^3+p^2 (3-2 r)\\
		&\qquad+2 r+(3-2 r) r^2+p (2-2 (-3+r) r))\bigr] t^4x^{-3}+\O(x^{-5}).
		\end{aligned}
\ees
By equating known coefficients of the asymptotic expansion of Gini means with coefficients of stable mean \eqref{exp-Mtt-stable}, we see that 
\begin{align*}
	a_1&= \tfrac12(p+r-1)\\
	\tfrac1{6}a_1(1+a_1)(1-4a_1) &=\tfrac{1}{24} \bigl[(-3-2 p^3+p^2 (3-2 r) 
		 +2 r \\
		  &\qquad +(3-2 r) r^2+p (2-2 (-3+r) r))\bigr]
\end{align*}
which is equivalent to
\bes
	pr(p+r)=0.
\ees
Since $G_{0,r}=B_r$, $G_{p,0}=B_p$ and $G_{p,-p}=B_0=G$, we may conclude that the only stable Gini means are power means.
\end{subsection}

\begin{subsection}{Stolarsky means}
	The Stolarsky means, also called the extended means or difference mean values, is a class of two-parameter means introduced by Stolarsky in \cite{Stol-1975}. Their properties were studied by Leach and Sholander in \cite{LeachSh-1978,LeachSh-1983} and further by P\'ales \cite{Pal-differences} and others.
	
	The Stolarsky mean of order $p,r$ is defined for all $s,t>0$ by
	\bes
		E_{p,r}(s,t)=\begin{cases}
			\left[\frac{r(t^p-s^p)}{p(t^r-s^r)}\right]^{1/(p-r)}, & p\ne r,\ p,r\ne0,\\
			\frac1{e^{1/r}}\left(\frac{t^{t^r}}{s^{s^r}}\right)^{1/(t^r-s^r)},& r=p\ne0,\\
			\left[\frac{t^r-s^r}{r(\log t-\log s)}\right]^{1/r},& p=0,\ r\ne0,\\
			\sqrt{st}, &p=r=0.
			\end{cases}
	\ees
	
	Special cases are obtained by limit procedure.
	It is symmetric both on $t$ and $s$ as well on $p$ and $r$.
	$E_{p,r}(s,t)$ increases with increase in either $s$ or $t$ and also with increase in either $r$ or $s$ (\cite{LeachSh-1978}).

	From paper \cite{ElVu-2014-04}, with $\alpha=0$ and $\beta=t$, we have:
\bes
	\begin{aligned}
	E_{p,r}(x-t,x+t)&\sim x+\tfrac16(p+r-3)t^2 x^{-1} 
		+\tfrac{1}{360} \bigl[-45-2 p^3+p^2 (5-2 r)\\
		&\qquad +r (10+(5-2 r) r)-2 p (-5+(-5+r) r) \bigr] t^4x^{-3}+\O(x^{-5}).
	\end{aligned}
\ees

 By comparing corresponding coefficients of Stolarsky with coefficients of stable mean, we see that
\begin{align*}
	a_1&= \tfrac16(p+r-3)\\
	\tfrac1{6}a_1(1+a_1)(1-4a_1) &=\tfrac{1}{360} \bigl[-45-2 p^3+p^2 (5-2 r) \\
		&\qquad+r (10+(5-2 r) r)-2 p (-5+(-5+r) r) \bigr]
\end{align*}
which reduces to
\bes
	(p-2r)(2r-q)(p+r)=0.
\ees
Since $E_{2r,r}=B_r$, $E_{p,2p}=B_p$ and $E_{p,-p}=B_0=G$, we may conclude that the only stable Stolarsky means are again power means.
\end{subsection}

\begin{subsection}{Generalized logarithmic mean}

        Let $r$ be a real number. The generalized logarithmic mean (\cite{Stol-1980}) is defined for $s,t>0$ ($s\neq t$) by 
    \bes
		L_r(s,t)=\begin{cases}
			\biggl(\frac{t^{r+1}-s^{r+1}}{(r+1)(t-s)}\biggr)^{1/r}, &r\ne -1,0,\\
			\frac{t-s}{\log t-\log s},&r=-1,\\
			\frac1e\biggl(\frac{t^t}{s^s}\biggr)^{1/(t-s)},&r=0.
			\end{cases}
	\ees
with $L_{-1}$ being the logarithmic and $L_0$ the identric mean.
It is Schur-convex for $r>1$ and Schur-concave for $r<1$ (\cite{ElPec-2000a}).

    The beginning of the asymptotic expansion of generalized logarithmic mean (\cite{ElVu-2014-04}) reads as 
    \bes
    	L_r(x-t,x+t)\sim x+\tfrac16(r-1)t^2x^{-1}-\tfrac1{360} (r-1)(2r^2+5r-13)t^4x^{-3}+\O(x^{-5}).
    \ees
In order $L_r$ to be stable its coefficients must coincide with stable mean coefficients \eqref{exp-Mtt-stable}, that is, the following conditions must hold
\begin{align*}
	a_1&=\tfrac16(r-1),\\
	 \tfrac1{6}a_1(1+a_1)(1-4a_1) &= -\tfrac1{360} (r-1)(2r^2+5r-13),
\end{align*}
which leads to the cubic equation with solutions $r=1$, $r=-\frac12$ and $r=-2$. In each of these three cases we have one of  the power means, that is, $L_1=A$, $L_{-\frac12}=B_{\frac12}$ and $L_{-2}=G$ which are all stable.

 \begin{rem}
 	Within classes of Gini, Stolarsky and generalized logarithmic mean only power means are stable.
 \end{rem}
\end{subsection}

\begin{subsection}{Stability and stabilizability with power means}\label{examples-subsection-power}
 Let us find coefficients for $(K,M)$-stabilizable mean $N$, where $K=B_p$ and $M=B_q$.
 Combining coefficients of stabilizable mean \eqref{thm-stabilizable-list-coeff-2} with those from \eqref{examples-exp-power} we find that coefficients in the expansion of mean $N$ which is stabilizable with pair of power means $(B_p,B_q)$ are:
 	\be\label{examples-list-MpMq-stabilizable}
 	\begin{aligned}
 		a_0^N &= 1,\\
 		a_1^N &= \frac16 (p+2 q-3),\\
 		a_2^N &= \frac1{360} (-45-2 p^3+p^2 (5-8 q)+2 p (5+2 (5-3 q) q)\\
 			&\qquad+4 q (5+(5-2 q) q)).
 	\end{aligned}
 	\ee

 Observe the list of coefficients \eqref{thm-stabilized-list-coeff-2}, where $K=B_p$ $N=B_q$ and whose coefficients can be obtained from \eqref{examples-exp-power}. Then we obtain the coefficients of mean $M$ which is stabilized with pair of power means $(B_p,B_q)$:
  	\begin{align*}
 		a_0^M &= 1,\\
 		a_1^M &= \frac14 (p+q-2),\\
 		a_2^M &= \frac1{192} (-24-2 p^3+p^2 (6-9 q)+q (2+q) (4+q)+p (8-6 (-2+q) q)).
 	\end{align*}
\end{subsection}

\begin{subsection}{Seiffert and Neuman-S\'andor means}
 Let $s,t>0$. The first and the second Seiffert means (\cite{Bull-2003}) are defined by 
 	\bes
 		P(s,t)  =\frac{t-s}{2 \arcsin\frac{t-s}{t+s}},\qquad
 		T(s,t)  =\frac{t-s}{2 \arctan\frac{t-s}{t+s}},
	\ees
	and Neuman-S\'andor mean (\cite{NeuSan-2003-SB1}) is defined by 
	\bes
		\NS(s,t) =\frac{t-s}{2 \arcsinh\frac{t-s}{t+s}}.
	\ees
	
	Asymptotic expansions of Seiffert means can be found in \cite{Vu}, with $\alpha=0$ and $\beta=t$:
	\begin{align}
		P(x-t,x+t) &\sim x-\frac16 tx^{-1} -\frac{17}{360} t^4 x^{-3} 
			-\frac{367}{15120}t^6 x^{-5} +\O(x^{-7}),\label{examples-exp-Seiffert1}\\
		T(x-t,x+t) &\sim x+\frac13 tx^{-1} -\frac{4}{45} t^4 x^{-3} 
			+\frac{44}{945}t^6 x^{-5} +\O(x^{-7}),\notag
	\end{align}	
	and the asymptotic expansion of the Neuman-S\'andor mean was given in \cite{ElVu-2014-20}:
	\be
		\NS(x-t,x+t) \sim x+\frac16 tx^{-1} -\frac{17}{360} t^4 x^{-3} 
			+\frac{367}{15120}t^6 x^{-5} +\O(x^{-7}).\label{examples-exp-NS}
	\ee
	Assume $P$ is $(K,M)$-stabilizable mean. Then comparing coefficients \eqref{examples-exp-Seiffert1} with those from list \eqref{thm-stabilizable-list-coeff-2} yields
	\bes
		-\frac16=\frac13(a_1^K+2a_1^M),
	\ees
	and with $a_1^K=-\frac12-2a_1^M$, comparison of the second coefficients yields
	\bes
		-\frac{17}{360}=-\frac1{360}(13 +32(a_1^M)^2)
	\ees
	or equivalently
	$
		a_1^M = \pm \frac1{\sqrt8}.
	$
	Each of these two values leads to contradiction when comparing the third coefficients.
	
	By the same procedure we may obtain that
	the second Seiffert and Neuman-S\'andor mean are also not stabilizable.
\end{subsection}
	
\begin{subsection}{Asymptotic inequalities and sub-stabilizability with power means}

Let us observe the difference between the first Seiffert mean $P$ and the resultant mean-map $\cR(K,N,M)$ with $K=B_p$ and $M=B_q$. Since for $(K,M)$-stabilizable mean $N$, $a_m^R$ is equal to $a^N_m$, we may observe the difference between the asymptotic expansion of the first Seiffert and mean with coefficients from Subsection \ref{examples-subsection-power}.
According to \eqref{examples-exp-Seiffert1} and \eqref{examples-list-MpMq-stabilizable}, its asymptotic expansion reads as
\begin{align*}
	 P&(x-t,x+t)-\cR(B_p,P,B_q)(x-t,x+t) 
	 	 =P(x-t,x+t)-R(x-t,x+t) \\
		& \sim - \left(\frac16+\frac16(p+2q-3)\right) t^2 x^{-1} 
		  - \biggl(\frac{17}{360}+\frac1{360} (-45-2 p^3+p^2 (5-8 q) \\
			&\qquad \qquad+2 p (5+2 (5-3 q) q)+4 q (5+(5-2 q) q))\biggr)t^4 x^{-3}+\O(x^{-5}).
\end{align*}
The best approximation is obtained when the coefficient in the first parentheses is equal to zero, i.e.\ when $p=2-2q$. For such relation between $q$ and $p$ the asymptotic expansion of a difference reads as
\begin{align*}
	P(x-t,x+t)&-\cR(B_p,P,B_q)(x-t,x+t)\\
		& \sim \frac1{90} (1-4q +2q^2) t^4 x^{-3}\\
		&\quad+\frac1{11340}\left(99-2 q(q-2)(-83+12 (-2+q) q) \right)  t^6 x^{-5} 
		+\O(x^{-7}).
\end{align*}
Again, from equating the first coefficient with zero, we obtain two solutions of quadratic equation: $q_{1,2}=1\pm \frac{\sqrt2}2$. For either one of these solutions, the asymptotic expansion of a difference is
\begin{align*}
	P(x-t,x+t)-\cR(B_p,P,B_q)(x-t,x+t) 
		 \sim \frac1{1134}t^6 x^{-5} +\O(x^{-7}),
\end{align*}
and thereby the difference is asymptotically greater than 0:
\bes
	P-\cR(B_p,P,B_q) \succ0,
\ees
which means that the necessary condition for the inequality $P>\cR(B_p,P,B_q)$ is fulfilled.
If $P$ is $(B_p,B_q)$-sub-stabilizable for $(p,q)=(\pm\sqrt2,1\mp \frac{\sqrt2}2)$, then those parameters are optimal. Numerical experiments and plotting the graph of a difference $P(s,1-s)-\cR(B_p,P,B_q)(s,1-s)$, $s\in[0,1]$, indicate that mean $P$ should be $(B_p,B_q)$-sub-stabilizable for these $p$ and $q$.

Observe the difference between the Neuman-S\'andor mean $\NS$ and the resultant mean-map $\cR(K,\NS,M)$ with $K=B_p$ and $M=B_q$. Similarly as before, using \eqref{examples-exp-NS} and \eqref{examples-list-MpMq-stabilizable}, its asymptotic expansion reads as
\begin{align*}
	 \NS(x-t,x+t)&-\cR(B_p,\NS,B_q)(x-t,x+t)\\
		& \sim \frac16 ( 4-2p-2q ) t^2 x^{-1} 
		  - \frac1{360}\bigl(28-10 p+2 p^3+4 p q (-5+3 q)\\
		  &\qquad +p^2 (-5+8 q)+4 q (-5+q (-5+2 q))\bigr)
		  t^4 x^{-3}+\O(x^{-5}).
\end{align*}
The best approximation is obtained when the coefficient in the first parentheses is equal to zero, i.e.\ when $p=4-2q$. For such relation between $q$ and $p$ the asymptotic expansion of a difference reads as
\begin{align*}
	\NS(x-t,x+t)&-\cR(B_p,\NS,B_q)(x-t,x+t) 
		  \sim \frac1{90} (9-16q+4q^2) t^4 x^{-3}\\
		&+\frac1{11340}\bigl(-123-4 (-4+q) q (67+12 (-4+q) q)\bigr)  t^6 x^{-5} 
		+\O(x^{-7}).
\end{align*}
Again, from equating the next coefficient with zero, we obtain two solutions of quadratic equation: $q_{1,2}=2\pm \frac{\sqrt7}2$. For either one of these solutions, the following holds
\begin{align*}
	\NS(x-t,x+t)-\cR(B_p,\NS,B_q)(x-t,x+t) 
		 \sim \frac{79}{3780} t^6 x^{-5} +\O(x^{-7}),
\end{align*}
which menas that is asymptotically greater than 0:
\bes
	\NS-\cR(B_p,\NS,B_q) \succ0.
\ees
The necessary condition for the inequality $\NS>\cR(B_p,P,B_q)$ is fulfilled.
If $\NS$ is $(B_p,B_q)$-sub-stabilizable for $(p,q)=(\pm\sqrt7,2\mp \frac{\sqrt7}2)$, then those parameters are the best possible. Numerical experiments and plotting the graph of a difference $\NS-\cR(B_p,\NS,B_q)$ on line $(s,1-s)$, $s\in[0,1]$, indicate that $\NS$ should be $(B_p,B_q)$-sub-stabilizable for these $p$ and $q$.

Similar procedure for the second Seiffert mean $T$, i.e.\ equating two coefficients with zero, does not give the difference that is always greater than 0. If only one coefficient is equated with zero, or equivalently $p=5-2q$, then the next coefficient is $a_2=\frac1{90}(5q^2-25q+22)$.
The problem of finding best parameters reduces to finding $q$ which minimizes the expression $a_2$, with $\lvert 10q-25 \rvert \ge \sqrt{185}$, and that inequality $T-\cR(B_p,T,B_q) >0$ still holds.
\end{subsection}
\end{section}


\begin{section}{Conclusion}\label{section-conclusion}

In this paper we have derived the complete asymptotic expansions of the resultant mean-map and consequently obtained the asymptotic expansion of stable (balanced) mean. 
Furthermore, we obtained the asymptotic expansion of stabilizable and stabilized means.
All the asymptotic expansions were given in a form of recursive relations for their coefficients. Besides the form of Theorems presented in Section 3, 
given asymptotic expansions may be used to obtain any unknown of three means involved in stabilizability problem.
Significance of the main results was is shown by examples of various types. 

Based on the asymptotic equalities from Section 4 
we may state the following.
\begin{conj}
\begin{enumerate}
	\item If mean $N$ is simultaneously $(K,M)$ and $(M,K)$-stabilizable, then $N=K=M$.
	\item If mean $M$ is simultaneously $(K,N)$ and $(N,K)$-stabilized, then either $M=K=N$ or $M=G=K\otimes N$.
	\item If mean $M$ is simultaneously $(K,N)$-stabilizable and $(K,N)$-stabilized, then $M=K=N$.
\end{enumerate}
\end{conj}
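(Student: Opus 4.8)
The plan is to promote the asymptotic coincidences already recorded in Section~\ref{section-simultaneous} to genuine identities between means; the essential difficulty is that an asymptotic expansion at infinity pins down a mean only up to asymptotic equality, so some non-asymptotic input is unavoidable. For each of the three parts I would argue in two stages.

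\emph{Stage 1: pin down all the coefficients.} In part~(1), applying Theorem~\ref{thm-stabilizable} to the pairs $(K,M)$ and $(M,K)$ gives $a_1^N=\tfrac13(a_1^K+2a_1^M)$ and $a_1^N=\tfrac13(a_1^M+2a_1^K)$, hence $a_1^K=a_1^M$; since $K$ and $M$ are stable, Theorem~\ref{thm-main} (whose recursion \eqref{thm-main-stable-am} determines every $a_m$ from $a_1$) forces $a_m^K=a_m^M$ for all $m$, and then the list \eqref{thm-stabilizable-list-coeff-2}, extended to higher $m$ by induction on the recursion \eqref{thm-stabilizable-coeff-amN}, shows that $(a_m^N)_m$ is exactly the stable-mean sequence \eqref{exp-Mtt-stable} built from $a_1^N=a_1^K$. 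Parts~(2) and~(3) go the same way: in part~(2) the $K\leftrightarrow N$ symmetry of $a_2^M$ in \eqref{thm-stabilized-list-coeff-2} yields $(a_1^K-a_1^N)(1+a_1^K+a_1^N)^2=0$, and on the branch $a_1^K=a_1^N$ the coefficients $a_m^M$ match \eqref{exp-Mtt-stable} while on the branch $a_1^K+a_1^N=-1$ they match the expansion of $G$ recorded in \cite{ElVu-2014-09}; part~(3) reduces, exactly as in Section~\ref{section-simultaneous}, to $a_1^K=a_1^N=a_1^M$ and then to the stable sequence for $M$. So after Stage~1 the unknown mean has the same asymptotic expansion as a known stable (indeed power) mean, or as $G$.

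\emph{Stage 2: remove the word ``asymptotically''.} The clean route is to work in the analytic category, in which \eqref{def-stabilizable} and \eqref{def-stabilized} extend holomorphically to a neighbourhood in $\bC^2$ and the unknown mean is analytic there. The key observation is that a homogeneous analytic function whose asymptotic expansion at infinity vanishes is identically zero: homogeneity of degree $1$ writes it as $x\,\varphi\bigl((x+t)/(x-t)\bigr)$ with $\varphi$ analytic near $1$, and the vanishing expansion as $x\to\infty$ is precisely the vanishing of the Taylor series of $\varphi$ at $1$. Applying this to $N-K$ in part~(1) (and to $M-K$, resp.\ $M-G$, in parts~(2),(3)) turns Stage~1 into the desired equalities $N=K=M$, $M=K=N$, and $M=G$; on the last branch of part~(2) the identity $G=G(N,K)$ derived in Section~\ref{section-simultaneous} then gives $M=G=N\otimes K$. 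This recovers in passing Aumann's analytic characterisation quoted in the Introduction. For the non-analytic case one would instead try the uniqueness results of Ra\"{i}ssouli: assuming without loss of generality $K\le M$ with $K$ strict and a cross mean, \cite{Raiss-2013-stabAnswer} supplies a \emph{unique} $(K,M)$-stabilizable mean lying between $K$ and $M$, so it suffices to prove the order lemma that a simultaneously $(K,M)$- and $(M,K)$-stabilizable mean is comparable to both $K$ and $M$; granting it, $N$ is that unique mean, and symmetry in the pair $(K,M)$ then collapses $K=M=N$. Parts~(2),(3) would use the corresponding comparison/uniqueness statements together with the Gauss identity.

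\emph{The main obstacle.} Stage~1 is pure bookkeeping with the recursions of Section~\ref{section-main}. The real content is Stage~2: passing from the asymptotic germ to a global statement. The analytic route costs the differentiability hypothesis the paper deliberately avoids, and the order lemma underpinning the non-analytic route --- that being simultaneously $(K,M)$- and $(M,K)$-stabilizable forces comparability with $K$ and $M$ --- does not seem to follow from \eqref{def-mean-minmax} alone and may simply fail without extra regularity. Settling this lemma, or producing a counterexample to it, is in my view the crux, and is presumably why the three statements are offered as a conjecture.
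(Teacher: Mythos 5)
The statement you are addressing is offered in the paper as a conjecture, and the paper contains no proof of it: what Section \ref{section-simultaneous} actually establishes is the asymptotic equality of $K$ and $M$ in part (1) and of $K$ and $N$ in parts (2) and (3), together with the observation that the first few computed coefficients of the remaining mean coincide with those of a stable mean, or of $G$ on the branch $a_1^K+a_1^N=-1$. Your Stage 1 is essentially that same computation. The one new ingredient you add there --- that the coefficient agreement persists to all orders --- is asserted rather than proved. For part (1) (and the first branches of (2) and (3)) it does follow, because once $a_m^K=a_m^M$ for all $m$ the recursion \eqref{thm-stabilizable-coeff-amN} determines $a_m^N$ uniquely from lower-order data and the stable sequence with $a_1=a_1^K$ is a solution of it, so induction closes the argument; you should say this explicitly. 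But for the branch $a_1^K+a_1^N=-1$ of part (2) the paper verifies only five coefficients against the expansion of $G$ from \cite{ElVu-2014-09}, and your claim that the $(K,N)$- and $(N,K)$-stabilized recursions remain compatible at every order and jointly single out the $G$-sequence is precisely what would have to be proved; neither you nor the paper does so.

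The decisive gap is your Stage 2, and you diagnose it correctly yourself: asymptotic equality of two homogeneous means as $x\to\infty$ does not imply equality, the analyticity needed to upgrade it is not among the hypotheses (the means here are only required to satisfy \eqref{def-mean-minmax} and be symmetric and homogeneous), and the comparability lemma needed to invoke the uniqueness theorem of \cite{Raiss-2013-stabAnswer} is left unproved and may well fail without extra regularity. So what you have written is a reasonable research programme rather than a proof: it reaches exactly as far as the paper does (which is why the paper states the result as a conjecture and explicitly notes that only the asymptotic equalities have been proved) and correctly isolates the obstruction, but it does not settle any of the three statements.
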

Notice we have proved the asymptotic equality between $K$ and $M$ in 1., and between $K$ and $N$ in 2.\ and 3.

Regarding questions from the Introduction, based on reasoning from Section 5, 
we may state the following.
\begin{enumerate}
\item All pairs $(p,q)$ such that Gini means are stable are 
	$\{(0,q),(p,0),(p,-p)\}.$
	All pairs $(p,q)$ such that Stolarsky means are stable are 
		$\{(2q,q),(p,2p),(p,-p)\}.$ 
	In addition, all parameters $r$ such that generalized logarithmic means are stable are 
		$\{ -2,-\frac12,1\}.$
\item The ﬁrst Seiffert mean $P$ is not stabilizable as well as the second Seiffert mean $T$ and Neuman-S\'andor mean $\NS$.
\item If $P$ is $(B_p,B_q)$-sub-stabilizable for 
	$(p,q)=(\pm\sqrt2,1\mp \frac{\sqrt2}2),$
	 then those parameters are the best possible.
\item If $\NS$ is $(B_p,B_q)$-sub-stabilizable for 
	$(p,q)=(\pm\sqrt7,2\mp \frac{\sqrt7}2),$
	then those parameters are the best possible.
\end{enumerate}

Methods presented in this paper can be used to obtain valuable information regarding means involved in other similar problems defined trough functional equations, especially when the explicit solution is not easy to find.

\end{section}


\end{document}